%
%
\documentclass[10pt]{amsart}

\usepackage{amssymb}

\usepackage{palatino}
\usepackage{mathpazo}

\usepackage[alphabetic]{amsrefs}
\usepackage{enumerate}

\usepackage{mathrsfs}

\setlength{\textwidth}{5.64in}
\setlength{\oddsidemargin}{.43in}
\setlength{\evensidemargin}{.43in}

\setlength{\topmargin}{-.0in}
\setlength{\textheight}{8.55in}
\setlength{\parindent}{.15in}
\setlength{\footskip}{.5in}


\DeclareMathOperator{\Int}{Int}
\DeclareMathOperator{\Gal}{Gal}

\DeclareMathOperator{\alg}{alg}
\DeclareMathOperator{\un}{un}
\DeclareMathOperator{\sep}{sep}
\DeclareMathOperator{\Lie}{Lie}

\DeclareMathOperator{\Sym}{Sym}
\DeclareMathOperator{\Gr}{Gr}

\DeclareMathOperator{\GL}{GL}
\DeclareMathOperator{\SL}{SL}

\DeclareMathOperator{\End}{End}
\DeclareMathOperator{\Hom}{Hom}

\DeclareMathOperator{\rad}{rad}
\DeclareMathOperator{\ch}{ch}
\DeclareMathOperator{\Ext}{Ext}

\DeclareMathOperator{\Aut}{Aut}
\DeclareMathOperator{\tr}{tr}

\DeclareMathOperator{\aff}{aff}

\DeclareMathOperator{\SP}{Sp}

\DeclareMathOperator{\SU}{SU}
\DeclareMathOperator{\Spec}{Spec}

\DeclareMathOperator{\soc}{soc}


\newcommand{\tensor}{\otimes}
\renewcommand{\setminus}{\ \rule[2.5pt]{7pt}{1pt}\ }


\newcommand{\bc}[2]{#1_{/#2}}


\newcommand{\TT}{\mathcal{T}}

\newcommand{\Z}{\mathbf{Z}}
\newcommand{\R}{\mathbf{R}}

\newcommand{\UU}{\mathcal{U}}

\newcommand{\LL}{\mathscr{L}}

\newcommand{\A}{\mathscr{A}}
\newcommand{\B}{\mathscr{B}}

\newcommand{\G}{\mathbf{G}}

\newcommand{\lie}[1]{\mathfrak{#1}}
\newcommand{\glie}{\lie{g}}

\newcommand{\hlie}{\lie{h}}

\newcommand{\mlie}{\lie{m}}
\newcommand{\rlie}{\lie{r}}

\newcommand{\Kun}{K_{\un}}
\newcommand{\Gun}{G_{/\Kun}}

\newcommand{\Aptun}{A_{\un}}

\newcommand{\Aun}{\A_{\un}}
\newcommand{\Tun}{T_{\un}}

\newcommand{\Ksep}{K_{\sep}}

\newcommand{\ksep}{k_{\sep}}
\newcommand{\kalg}{k_{\alg}}

\newcommand{\mm}{\mathfrak{m}}

\newcommand{\parah}{\mathcal{P}}

\newcommand{\parahun}{\mathcal{P}_{\un}}

\newcommand{\Gm}{{\G_m}}
\newcommand{\Ga}{{\G_a}}
\newcommand{\Aff}{\mathbf{A}}
\newcommand{\Proj}{\mathbf{P}}


\newtheorem*{theorem}{Theorem}

\newtheorem{theoreml}{Theorem}

\newtheorem*{prop}{Proposition}
\newtheorem*{prop-def}{Proposition/Definition}

\swapnumbers

\swapnumbers

\newtheorem{stmt}{}[subsection]

\newtheorem*{defn}{Definition}

\swapnumbers
\theoremstyle{remark}

\newtheorem*{rem}{Remark}

\numberwithin{equation}{subsection}

\DeclareMathOperator{\red}{reduc}
\DeclareMathOperator{\ind}{ind}

\DeclareMathOperator{\triv}{tr}

\newcommand{\e}{\varepsilon}

\newcommand{\Gred}{G_{{\red}}}

\newcommand{\Tred}{T_{{\red}}}

\newcommand{\Build}{\mathscr{I}}

\newcommand{\Facet}{\mathcal{F}}

\begin{document}
\begin{abstract}
  If $G$ is a connected linear algebraic group over the field $k$, a
  Levi factor of $G$ is a reductive complement to the unipotent
  radical of $G$.  If $k$ has positive characteristic, $G$ may have no
  Levi factor, or $G$ may have Levi factors which are not
  geometrically conjugate. We give in this paper some
  \emph{sufficient} conditions for the existence and the conjugacy of
  Levi factors of $G$.

  Let $\A$ be a Henselian discrete valuation ring with fractions $K$
  and with \emph{perfect} residue field $k$ of characteristic $p>0$.
  Let $G$ be a connected and reductive algebraic group over $K$.
  Bruhat and Tits have associated to $G$ certain smooth $\A$-group
  schemes $\parah$ whose generic fibers $\parah_{/K}$ coincide with
  $G$; these are known as \emph{parahoric group schemes}. The special
  fiber $\parah_{/k}$ of a parahoric group scheme is a linear
  algebraic group over $k$. If $G$ splits over an unramified extension
  of $K$, we show that $\parah_{/k}$ has a Levi factor, and that any
  two Levi factors of $\parah_{/k}$ are geometrically conjugate.
\end{abstract}

\title[Levi decompositions]{Levi decompositions of a linear algebraic group}
\author{George J. McNinch}
\address{Department of Mathematics,
         Tufts University,
         503 Boston Avenue,
         Medford, MA 02155,
         USA}
\date{\today}
\email{george.mcninch@tufts.edu, mcninchg@member.ams.org}
\thanks{\noindent Research of McNinch supported
  in part by the US NSA award H98230-08-1-0110.}

\dedicatory{Dedicated to the memory of Vladimir Morozov and to his
  contributions to mathematics}
\maketitle

\setcounter{tocdepth}{1}
\tableofcontents

\section{Introduction}
\label{introduction}

\subsection{Linear groups}
Let $G$ be a connected, linear algebraic group over a field $k$, and
suppose that the unipotent radical of $G_{/\kalg}$ is defined over
$k$; see \S \ref{unipotent-radical-rational}. A \emph{Levi factor} of
$G$ is a complement in $G$ to the unipotent radical.  If $k$ has
characteristic $p >0$, $G$ may have no Levi factors, or $G$ may
non-conjugate Levi factors; see the overview in \S
\ref{levi-factors-overview}.  In this paper, we investigate the
existence and conjugacy of Levi factors. For some previous work on
these matters, see the paper of J.  Humphreys \cite{humphreys}.

Our approach to Levi factors uses results from the cohomology of
linear algebraic groups; see \S \ref{levi-via-cohomology}. For
example, suppose that the unipotent radical $R$ of $G$ is a vector
group which is $G$-equivariantly isomorphic to a \emph{linear
  representation} $V$ of $G$. If $H^2(G,V) = 0$, then $G$ has a Levi
factor, and if $H^1(G,V)=0$ then any two Levi factors of $G$ are
conjugate by an element of $R(k) = V$.  Of course, these results are
well-known for ``abstract'' groups $G$; see e.g. \cite{Brown}*{Ch.
  IV}. We observe here their validity for a linear algebraic group
$G$.

In Theorems \ref{levi-conjugacy} and \ref{easy-cohomology-existence},
we give straightforward extensions of these cohomological criteria for
the existence and conjugacy of Levi factors when $R$ is no longer a
vector group; note that our results require $G$ to satisfy condition
\textbf{(L)} of \S\ref{linearizable}; this condition means that $R$
has a filtration for which each consecutive quotient is a vector group
with a linear action of $\Gred$.

For a reductive group $H$ in characteristic $p$, J. C. Jantzen has
proved that any $H$-module $V$ having $\dim V \le p$ is completely
reducible; see \cite{jantzen-semisimple}. Assume that $\dim R < p$ and
that condition \textbf{(L)} holds for $G$. If $G$ has a Levi factor,
Jantzen's result implies that any two Levi factors of $G$ are
conjugate by an element of $R(k)$.  Under a further assumption on the
character of the $\Gred$-module obtained from $R$, Jantzen's result
implies the existence of a Levi factor of $G$. For these results, see
Theorem \ref{small-modules}.

Finally, if $H$ is reductive and $P$ is a parabolic subgroup, one
knows for any linear representation $V$ of $H$ that $H^i(H,V) \simeq
H^i(P,V)$ for $i \ge 0$. Using this fact, we prove for a group $G$
satisfying \textbf{(L)} that the existence of a Levi factor follows
from the existence of subgroup of $G$ mapping isomorphically to a
Borel subgroup of the reductive quotient $\Gred$; see the results
\ref{initial-criteria} and Theorem \ref{main-existence-result} which
 are crucial to our investigation of Levi factors of the special fibers
of parahoric group schemes undertaken in \S \ref{parahoric-section}.

\subsection{Parahoric group schemes}
We wish to apply the conditions just mentioned to the special fiber of
a parahoric group scheme; we begin our discussion by briefly
describing these group schemes.

Fix a Henselian discrete valuation ring (DVR) $\A$ with fractions $K$
and residues $k$ (recall that if $\A$ is complete, it is Henselian).
We suppose the characteristic $p$ of the residue field $k$ to be
positive.  Moreover, we suppose that $k$ is \emph{perfect}.

To a connected and reductive group $G$ over $K$, Bruhat-Tits associate
a topological space with an action of the group of $K$-rational points
$G(K)$; it is known as the \emph{affine building} $\Build$ of $G$. To
a facet $\Facet$ of an apartment of $\Build$, there corresponds a
smooth affine group scheme $\parah_\Facet$ over $\A$ whose generic
fiber $\parah_{\Facet/K}$ is equal to $G$; this group scheme has the
property that its group of $\A$-points $\parah_\Facet(\A)$ is the
``connected stabilizer'' of $\Facet$ in $G(K)$.

If $G$ is $K$-split and if $\Facet = \{x\}$ consists in a so-called
\emph{special point} $x$ of $\Build$, then the group scheme $\parah_x$
is split and reductive over $\A$. For general $\Facet$, the group
scheme $\parah = \parah_\Facet$ is not reductive; in particular, the
special fiber $\parah_{/k}$ -- the linear algebraic group
obtained from $\parah$ by base-change to $\Spec(k)$ -- need not be
reductive over $k$.

In his article in the 1977 Corvallis proceedings
\cite{corvallis}*{\S3.5}, J.  Tits wrote that the linear group
$\parah_{/k}$ possesses a Levi factor.  Since the article was intended
-- as Tits wrote in its opening paragraphs -- for utilizers, proofs
were mostly omitted. In particular, no proof of this statement was
given in \emph{loc. cit.}, and none seems to have appeared elsewhere.

We apply the results of \S\ref{Levi-criteria} to obtain the following
results:
\begin{theoreml}
  Let $\parah$ be a parahoric group scheme over $\A$ with generic
  fiber $G = G_{/K}$.
  \begin{enumerate}[(i)]
  \item If $G$ is split over $K$ and if $S$ is a maximal split torus
    of $\parah_{/k}$, then $\parah_{/k}$ has a unique Levi factor
    containing $S$. In particular, any two Levi factors of
    $\parah_{/k}$ are $\parah(k)$-conjugate.
  \item If $G_{/L}$ is split for an unramified extension $K \subset
    L$, then $\parah_{/k}$ has a Levi factor, and any two
    Levi factors of $\parah_{/k}$ are geometrically conjugate.
  \end{enumerate}
\end{theoreml}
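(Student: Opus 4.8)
The plan is to reduce Theorem~A to the cohomological and Borel-subgroup criteria announced in the introduction, applying them to the linear algebraic group $\parah_{/k}$. First I would verify that $\parah_{/k}$ satisfies condition \textbf{(L)}: the unipotent radical $R$ of $\parah_{/k}$ is described by Bruhat--Tits theory in terms of root subgroups of $G$ relative to a maximal split torus, graded by valuations of roots, and each graded piece is a vector group on which the reductive quotient $\Gred$ of $\parah_{/k}$ acts linearly; this is exactly what \textbf{(L)} requires. For part~(i), I would then exhibit a subgroup of $\parah_{/k}$ mapping isomorphically onto a Borel subgroup of $\Gred$: starting from the Iwahori case or from the explicit $\A$-structure of $\parah$, the torus $S$ together with the appropriate positive root subgroups (those whose associated affine root function is an integer, i.e.\ the ``integral'' root subgroups) assemble into such a subgroup. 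Theorem~\ref{main-existence-result} then yields a Levi factor $L$ of $\parah_{/k}$ containing $S$; uniqueness of $L$ among Levi factors containing $S$, and hence $\parah(k)$-conjugacy of all Levi factors, follows from the conjugacy statement Theorem~\ref{levi-conjugacy} (or its refinement via $S$ as a maximal split torus), using that all maximal split tori of $\parah_{/k}$ are $\parah(k)$-conjugate and that $k$ is perfect.

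For part~(ii) the strategy is descent from the unramified extension $L/K$. Let $\A'$ be the integral closure of $\A$ in $L$; since $L/K$ is unramified and $k$ is perfect, $\A'$ is again a Henselian DVR with residue field $k'$ a finite separable (hence again perfect) extension of $k$, and $\parah$ acquires over $\A'$ the parahoric group scheme $\parah'$ attached to the image of $\Facet$ in the building of $G_{/L}$, with $(\parah')_{/k'} = (\parah_{/k})_{/k'}$. By part~(i) applied over $L$, the group $(\parah_{/k})_{/k'}$ has a Levi factor. Now I invoke the cohomological existence criterion Theorem~\ref{easy-cohomology-existence}: the obstruction to a Levi factor of $\parah_{/k}$ lives in $H^2$ of $\Gred$ with coefficients in the successive vector-group quotients of $R$, and the existence of a Levi factor after the finite separable base change $k'/k$ forces this obstruction class to die; a Galois-descent / restriction-corestriction argument for the relevant cohomology of the reductive $k$-group $\Gred$ then shows the class was already zero over $k$ — here one uses that $\mathrm{Gal}(k'/k)$ is finite and the coefficient modules are finite-dimensional, together with the fact proved earlier that these $H^i$ agree with $H^i$ of a Borel, making the descent tractable. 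Finally, geometric conjugacy of any two Levi factors follows by the same descent applied to Theorem~\ref{levi-conjugacy}: over $\kalg$ the conjugacy is by an element of $R(\kalg)$, which is precisely the assertion of geometric conjugacy.

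The main obstacle I anticipate is part~(ii): one must be careful that the parahoric group scheme genuinely base-changes correctly along the unramified extension (so that the special fiber is simply extended from $k$ to $k'$), and that the cohomological obstruction class really descends. The descent step is the crux — it is not automatic that vanishing of an $H^2$-obstruction after a finite separable field extension implies vanishing before; one needs the transfer (corestriction) argument and must check that the coefficient modules and the group $\Gred$ behave well under $k'/k$. The reduction in part~(i) to finding a lift of a Borel subgroup should be comparatively routine given the explicit Bruhat--Tits description, though pinning down precisely which root subgroups assemble into the Borel lift, and checking \textbf{(L)} holds with $\Gred$-linear graded pieces, will require some care with the valuation filtration.
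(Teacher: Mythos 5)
Your part (i) is close to the paper's route (condition \textbf{(L)} via the valuation filtration of the root subgroups, a Borel lift assembled from the ``integral'' root subgroups, then Theorem \ref{main-existence-result}), but both the uniqueness/conjugacy step of (i) and the whole descent in (ii) lean on cohomological vanishing that you have not established and that the paper deliberately avoids. Theorem \ref{levi-conjugacy} requires $H^1(\Gred,V_i)=0$ for every graded piece of the unipotent radical; nothing in the parahoric setup guarantees this, and the paper never verifies it. The uniqueness of the Levi factor containing a fixed maximal torus instead comes out of Theorem \ref{main-existence-result} itself, whose proof uses only the uniqueness of the Borel lift $B_1$ and an argument with opposite Borel subgroups --- no $H^1$ vanishing. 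Likewise your geometric-conjugacy claim in (ii) (``over $\kalg$ the conjugacy is by an element of $R(\kalg)$'') is precisely the conclusion of Theorem \ref{levi-conjugacy} and so again presupposes $H^1=0$; the paper gets geometric conjugacy from the facts that every Levi factor contains a maximal torus, all maximal tori are geometrically conjugate, and the Levi factor containing a given maximal torus is unique.

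The descent step in (ii) is where the real gap lies. First, restriction--corestriction is the wrong tool: Hochschild cohomology of algebraic groups has no transfer along a field extension $k'/k$ (this is not a finite-index subgroup situation); what one actually has is the base-change isomorphism \ref{cohomology-base-change}, $H^n(G,V)\tensor_k k' \simeq H^n(G_{/k'},V\tensor_k k')$, whose injectivity already kills any class that dies over $k'$. But even granting that, your induction on the filtration does not close: knowing that $\parah_{/k'}$ has \emph{some} Levi factor tells you only that the obstruction class attached to the choices made over $k'$ vanishes; to compare it with the class attached to your choices over $k$ at each stage of the filtration you would need to conjugate the intermediate complements into one another, i.e.\ you would again need $H^1$ vanishing. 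The paper sidesteps all of this by descending the Levi factor itself rather than a cohomology class: over $\kalg$ (equivalently $\ksep$, as $k$ is perfect) the Levi factor containing $T_{/\ksep}$ is \emph{unique}, hence Galois-stable, hence defined over $k$ by \ref{stmt:sep-closed-enough}. That uniqueness-based descent is the key idea your proposal is missing.
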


Recall that subgroups (or elements, or ...) are \emph{geometrically
  conjugate} under the action of a linear algebraic group $H$ over $k$
provided they are conjugate by an element of $H(\kalg)$ where $\kalg$
is an algebraic closure of $k$.

We observe that if $G_{/L}$ is not split for any unramified extension
$L \supset K$, the special fiber $\parah_{/k}$ of a parahoric group
scheme can possess Levi factors which are not geometrically conjugate;
see the example given in \S\ref{unitary-group-example}

After seeing a preliminary version of this manuscript, Gopal Prasad
explained to the author that Rousseau's theorem (\cite{rousseau}
and \cite{prasad-galois-fixed}) should allow one to prove the
\emph{existence} of a Levi factor in the special fiber of a parahoric
group scheme under the weaker assumption that $G$ splits over a
\emph{tamely ramified} extension of $K$; this result will appear in a
subsequent paper.

\subsection{Terminology}
\label{terminology}

By a linear algebraic group $G$ over a field $k$ we mean a smooth,
affine group scheme of finite type over $k$. When we speak of a closed
subgroup of an algebraic group $G$, we mean a closed subgroup scheme
over $k$; thus the subgroup is required to be ``defined over $k$'' in
the language of \cite{springer-LAG} or \cite{borel-LAG}. Similar
remarks apply to homomorphisms between linear algebraic groups. Note
that we will occasionally use the terminology ``$k$-subgroup'' or
``$k$-homomorphism'' for emphasis.  If $\ell \supset k$ is a field
extension, we write $G_{/\ell}$ for the linear algebraic group over
$\ell$ obtained by extension of scalars.  If $g\in G(k)$ is a
$k$-rational point of the linear algebraic group $G$, we write
$\Int(g)$ for the inner automorphism of $G$ determined by $g$.

Following terminology in \cite{serre-AGCF}*{\S VII.1}, we will say
that the sequence 
\begin{equation*}
  1 \to N \xrightarrow{i} G \xrightarrow{\pi} H \to 1
\end{equation*}
of linear algebraic groups is \emph{strictly exact} if the sequence
\begin{equation*}
  1 \to N(\kalg) \xrightarrow{i} G(\kalg) \xrightarrow{\pi} H(\kalg) \to 1
\end{equation*}
is exact for an algebraic closure $\kalg$ of $k$, and if $i$ and
$\pi$ are \emph{separable} so that $\pi$ induces an isomorphism $G/N
\to H$ of algebraic groups.

\subsection{Acknowledgments}
The author would like to thank  B. Conrad, J. Humphreys, J. C.
Jantzen, and an anonymous referee for useful remarks on a preliminary
version of this manuscript. In particular, I thank the referee for
communicating the  proof of Proposition \ref{parabolic} given
here; it is much simpler than the one I originally gave.

\section{Some preliminaries}

Throughout this section, $G$ denotes a linear algebraic group over the
field $k$; we make further assumptions on $G$ and on $k$ in the
final paragraph \S \ref{representation-theory}.

\subsection{The question of rationality of the unipotent radical over a ground field}
\label{unipotent-radical-rational}
The unipotent radical of $G$ is the largest connected, unipotent,
normal subgroup $R = R_uG_{/\kalg}$ of $G_{/\kalg}$, where $\kalg$ is an
algebraic closure of $k$.  As the following example shows, the
subgroup $R$ is  in general not defined over $k$.

\begin{stmt}
  \label{stmt:pseudo-red} Let $\ell$ be a purely inseparable extension
  of $k$ having degree $p$; say $\ell = k(a)$ with $a^p \in k$.
  Consider the $k$-group $G = R_{\ell/k} \Gm$ obtained from the
  multiplicative group $\Gm$ by restriction of scalars.  The unipotent
  radical of the $\ell$-group $G_{/\ell}$ is defined over $\ell$ and
  has dimension $p-1$, but any normal, connected, unipotent, smooth
  subgroup scheme of $G$ defined over $k$ is trivial.  
\end{stmt}

\begin{proof}
  See \cite{conrad-gabber-prasad}*{Example 1.1.3}, and see e.g.
  \cite{conrad-gabber-prasad}*{\S A.5} or \cite{springer-LAG}*{11.4}
  for the notion of ``restriction of scalars''.
\end{proof}

\noindent In what follows, we will often consider the following assumption on
$G$.
\begin{itemize}
\item[\textbf{(R)}] The unipotent radical $R = R_uG$ is defined over
  $k$.
\end{itemize}
Note the following:
\begin{stmt}
  If $k$ is perfect, then condition \textbf{(R)} holds for the linear
  group $G$; i.e. $R$ is a $k$-subgroup of $G$.
\end{stmt}

\begin{proof}
  This follows by Galois descent; see \cite{springer-LAG}*{11.2.8, 14.4.5(v)}.
\end{proof}

\subsection{Groups with a normal, split unipotent subgroup}
\label{complement-to-R}

Recall \cite{springer-LAG}*{\S14} that a connected, unipotent linear
group $U$ is said to be $k$-split provided that there is a sequence
\begin{equation*}
  1 = U_m \subset U_{m-1} \subset \cdots \subset U_1 \subset U_0 = U
\end{equation*}
of closed, connected, normal subgroups of $U$ such that each quotient
$U_i/U_{i+1}$ is isomorphic to $\G_{a/k}$.  

\begin{stmt}[\cite{springer-LAG}*{14.3.10}]
  If $k$ is perfect, every connected unipotent group over $k$
  is $k$-split
\end{stmt}

\begin{defn}
  A \textbf{vector group} $U$ is a connected, split, abelian, unipotent
  group having  exponent $p$ in case $k$ has characteristic $p>0$.
\end{defn}

\begin{stmt}
  \label{vectorgroup}
  The linear algebraic group $U$ is a vector group if and only if it is
  isomorphic to a direct product of copies of $\G_{a/k}$.
\end{stmt}
\begin{proof}
  \cite{oesterle}*{V.2.3}.
\end{proof}

Suppose now that $R \subset G$ is a normal, connected, 
unipotent subgroup of $G$.  By a \emph{splitting sequence for $R$} we
mean a sequence
\begin{equation*}
  1 = R_m \subset R_{m-1} \subset \cdots \subset R_1 \subset R_0 = R
\end{equation*}
of closed, connected, $k$-subgroups of $R$ which are
normal in $G$ with the property that each quotient
$R_i/R_{i+1}$ is a vector group.

If $R$ possesses a splitting sequence, evidently $R$ is $k$-split. Conversely:
\begin{stmt}
  \label{split-radical}
  If $R$ is $k$-split, then $R$ has a splitting sequence.
\end{stmt}

\begin{proof}
  Consider first the lower central series where for $i>0$, $R_i =
  (R,R_{i-1})$. Since $R$ is split, it follows from
  \cite{springer-LAG}*{14.3.12(3),(2)} that each $R_i$ is $k$-split,
  and that each quotient $R_i/R_{i+1}$ is a $k$-split abelian
  unipotent group.  Thus we are reduced to the case where $R$ is
  abelian $k$-split unipotent.  If the characteristic $p$ of $k$ is 0
  there is nothing left to do, so suppose $p>0$. If $R$ has exponent
  $p$, it is a vector group.  The result for any commutative $R$ now
  follows by induction on the exponent by considering the filtration
  $1 \to R^p \to R$; note that $R^p$ is $k$-split by
  \cite{springer-LAG}*{14.3.12(2)}.
\end{proof}

\begin{prop}
  Suppose that the linear algebraic group $G$ has a normal, connected, $k$-split
  unipotent subgroup $R$. Then there is a morphism of $k$-varieties
  $s:G/R \to G$ which is a section to $\pi$.  In particular,
  $\pi:G(k) \to (G/R)(k)$ is surjective.
\end{prop}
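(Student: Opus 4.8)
The plan is to view $\pi$ as exhibiting $G$ as a torsor over the base $G/R$ under $R$ (acting by right translations), and to show this torsor is trivial. Note first that $G/R$ is again an affine algebraic group, since $R$ is a closed \emph{normal} subgroup of the affine group $G$ (a standard property of quotients by normal subgroups; see \cite{springer-LAG}). It then suffices to prove, by induction on $m$, the following assertion: for every affine algebraic $k$-group $H$ and every closed normal $k$-split unipotent subgroup $U$ of $H$ possessing a splitting sequence (see \ref{split-radical}) with exactly $m$ strict inclusions, the quotient morphism $H \to H/U$ admits a section defined over $k$. Taking $H = G$ and $U = R$ yields the Proposition, and the final assertion follows immediately: a $k$-section $s$ sends $\bar g \in (G/R)(k)$ to $s(\bar g) \in G(k)$ with $\pi(s(\bar g)) = \bar g$.

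For the base case $m \le 1$, the group $U$ is a vector group or trivial, hence $U \cong \Ga^n$ by \ref{vectorgroup}. Here I would invoke that torsors under the commutative group $U$ over a scheme $X$ are classified by $H^1(X, \OO_X)^{\oplus n}$ --- torsors under $\Ga$ corresponding to elements of $H^1(X, \OO_X)$, with the fppf, \'etale and Zariski topologies agreeing because $\Ga$ is smooth. Since $H/U$ is affine, this cohomology group vanishes, so the torsor $H \to H/U$ is trivial and acquires the desired $k$-section.

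For the inductive step $m \ge 2$, write $1 = U_m \subsetneq \cdots \subsetneq U_1 \subsetneq U_0 = U$ for the splitting sequence, so each $U_i$ is normal in $H$ and each $U_i/U_{i+1}$ is a vector group. Put $\bar H = H/U_1$. Then $U/U_1$ is a vector group, normal in $\bar H$, with $\bar H/(U/U_1) = H/U$; so the base case applied to $\bar H$ and $U/U_1$ produces a $k$-section $s_1$ of the projection $\bar q : H/U_1 \to H/U$. On the other hand, $U_1$ is a closed normal $k$-split unipotent subgroup of $H$ whose splitting sequence $1 = U_m \subsetneq \cdots \subsetneq U_1$ has $m - 1$ strict inclusions, so the inductive hypothesis gives a $k$-section $s_2$ of $q_1 : H \to H/U_1$. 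Since $H \to H/U$ factors as $\bar q \circ q_1$, the composite $s = s_2 \circ s_1$ satisfies $\bar q \circ q_1 \circ s_2 \circ s_1 = \bar q \circ s_1 = \mathrm{id}_{H/U}$, completing the induction.

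The main obstacle is the base case; everything else is formal bookkeeping along the splitting sequence --- one only needs the terms $U_i$ to be normal in $H$, which is part of the definition of a splitting sequence, and one must carry out all constructions over $k$ so that the section is a $k$-morphism. The substantive input is the triviality of vector-group torsors over an affine base: concretely, the vanishing $H^1(X, \OO_X) = 0$ for $X$ affine, together with the identification of $\Ga$-torsors over $X$ with classes in $H^1(X, \OO_X)$ --- ultimately the fact that the split unipotent group $\Ga$ is ``special'' in the sense of Serre.
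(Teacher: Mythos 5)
Your argument is correct, but it takes a different route from the paper. The paper simply cites Rosenlicht: his Lemma 2 shows that $\pi:G \to G/R$ is a Zariski-locally trivial principal $R$-bundle for $R$ split unipotent (indeed split solvable), and his Theorem 1 then produces a regular cross-section over $k$ because the base $G/R$ is affine. You instead perform the d\'evissage yourself, reducing along a splitting sequence to the case of a single vector group $\Ga^n$, where the substantive input is the identification of $\Ga^n$-torsors over an affine $X$ with $H^1(X,\OO_X)^{\oplus n}=0$. The inductive bookkeeping is fine (composing the two sections in the right order), and the base case is sound provided you grant that $H \to H/U$ is an fppf torsor under $U$ and that $\Ga$-torsors descend all the way to the Zariski topology. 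On that last point your justification is slightly misstated: smoothness of $\Ga$ only gives the agreement of fppf and \'etale cohomology; the further descent to $H^1_{\mathrm{Zar}}(X,\OO_X)$ is the additive Hilbert 90 (equivalently, the ``specialness'' of $\Ga$, which you do invoke correctly in your closing remark). The trade-off between the two approaches: the citation of Rosenlicht is shorter and yields the stronger intermediate statement of Zariski-local triviality for general split solvable $R$, while your argument is essentially self-contained and makes visible exactly which cohomological vanishing is being used.
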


\begin{proof}
  This is a consequence of results of Rosenlicht
  \cite{rosenlicht-rationality}; Lemma 2 of \emph{loc. cit.} shows that
  $\pi:G \to G/R$ is a Zariski-locally trivial principal
  $R$-bundle, and Theorem 1 of \emph{loc. cit.} shows, since $G/R$
  is affine, that $\pi$ has a regular cross-section defined over $k$.
\end{proof}

\subsection{Condition \textbf{(L)}}
\label{linearizable}

Let $U$ be a vector group, and suppose that  $G$ acts by group
automorphisms on $U$.
\begin{defn}
  The action of $G$ on $U$ will be said to be \textbf{linearizable} if
  there is a $G$-equivariant isomorphism between $U$ and the Lie
  algebra $\Lie(U)$, where $\Lie(U)$ is viewed as a vector group with
  its natural action of $G$.
\end{defn}
The next result is straightforward:
\begin{stmt}
  The following are equivalent:
  \begin{enumerate}[(a)]
  \item The action of $G$ on $U$ is linearizable.
  \item There is a $G$-equivariant isomorphism $U \simeq V$ for
    \emph{some} linear representation $V$ of $G$.
  \item There is an action of the 1 dimensional split torus $\Gm$ on
    $U$ by $G$-equivariant group automorphisms such that the resulting
    action on $\Lie(U)$ is scalar multiplication.
  \end{enumerate}
\end{stmt}

Suppose that condition \textbf{(R)} holds for the linear algebraic group $G$,
and that the unipotent radical $R$ of $G$ is $k$-split.  For any
splitting sequence of $R$ as in \ref{split-radical}, notice that the
conjugation action of $R$ on the quotient $R_i/R_{i+1}$ is trivial for
each $i$; i.e. the reductive quotient $\Gred = G/R$ acts on each
vector group $R_i/R_{i+1}$.

We will be interested in the following condition on $G$:
\begin{enumerate}
\item[\textbf{(L)}] Condition \textbf{(R)} holds for $G$, the
  unipotent radical $R$ is $k$-split, and for some splitting
  sequence of $R$ as in \ref{split-radical}, the
  $\Gred$-action on the vector group $R_i/R_{i+1}$ is linearizable for
  each $i$.
\end{enumerate}
A splitting sequence $1 = R_n \subset R_{n-1} \subset \cdots \subset R_0 =
R$ for which \textbf{(L)} holds will be called a \textbf{linearizable
splitting sequence} for $R$.

\begin{rem}
  \begin{enumerate}[(a)]
  \item If $P$ is a parabolic subgroup of a reductive group $G$, then
    it is well-known that condition \textbf{(L)} holds for $P$.
  \item Suppose that $G$ is \emph{reductive} and that $G$ acts by
    group automorphisms on the vector group $U$. The author is aware
    neither of a proof that the $G$-action on $U$ is always
    linearizable, nor of a counterexample to that statement.
  \item The proof of \ref{split-radical} amounts to a construction of
    a \emph{canonical} splitting sequence for which the $R_i$ are
    \emph{characteristic} in $G$. Nevertheless, we only require the
    condition in \textbf{(L)} to hold for \emph{some} splitting
    sequence; this raises the question: if $k \subset \ell$ is a
    separable field extension and \textbf{(L)} holds for $G_{/\ell}$,
    does \textbf{(L)} hold for $G$?
  \end{enumerate}
\end{rem}

\subsection{Recollections: representations of a reductive group}
\label{representation-theory}
In this section, let $k$ be algebraically closed and let $G$ be a
connected and reductive algebraic group over $k$. Fix a maximal torus
$T$ of $G$ and a Borel subgroup $B$ containing $T$.

A dominant weight $\tau \in X^*(T)$, determines the \emph{standard
  module} $H^0(\tau) = \ind_B^G(\tau)$ \cite{JRAG}*{II.2.1} which can
be realized as the global sections of a suitable $G$-linearized line
bundle on $G/B$.  The \emph{Weyl module} $V(\tau)$ is the
contragredient $H^0(-w_0\tau)^\vee$ \cite{JRAG}*{II.2.13} (where $w_0$
is the long word in the Weyl group $N_G(T)/T$).

There is a simple module $L(\tau)$ for which 
\begin{equation*}
  L(\tau) \simeq \soc  H^0(\tau) \quad \text{and} 
  \quad  L(\tau) \simeq V(\tau)/\rad V(\tau),
\end{equation*}
where $\soc(-)$ denotes the largest semisimple submodule, and
$\rad(-)$ denotes the largest submodule for which the quotient is
semisimple; see \cite{JRAG}*{II.2.3, II.2.14}. Moreover, if $L$ is a
simple $G$-module, then $L$ is isomorphic to $L(\tau)$ for a unique
dominant weight $\tau$ \cite{JRAG}*{Prop. II.2.4}. 

We record the following:
\begin{stmt}[\cite{JRAG}*{Prop. II.2.14}]
  \label{ext-by-Weyl-radical}
  If $\tau,\gamma \in X^*(T)$ are dominant weights for which $\gamma \not > \tau$,
  then 
  \begin{equation*}
    \Ext^1_G(L(\tau),L(\gamma)) \simeq \Hom_G(\rad V(\tau),L(\gamma)).
  \end{equation*}
\end{stmt}

\begin{stmt}[\cite{JRAG}*{Prop. II.4.16}]
  \label{standard-Weyl-ext}
  If $\tau,\gamma \in X^*(T)$  are dominant weights, then
  \begin{equation*}
    \Ext^i(V(\tau),H^0(\gamma)) = 0 \quad \text{for all $i>0$}.
  \end{equation*}
  In particular, $H^i(G,H^0(\gamma)) = 0$ for all $i > 0$.
\end{stmt}

For a $G$-module $V$, we may write $V = \bigoplus_{\lambda \in X^*(T)} V_\lambda$
as a direct sum of its weight spaces; the character of $V$ is then 
\begin{equation*}
  \ch V = \sum_{\lambda \in X^*(T)} \dim V_\lambda e^\lambda
\end{equation*}
viewed as an element of the integral group ring $\Z[X^*(T)]$ (which
has $\Z$-basis consisting of the $e^\lambda$). For a dominant weight
$\lambda$ we write $\chi(\lambda) = \ch H^0(\lambda) = \ch
V(\lambda)$; note that $\chi(\lambda)$ is given by Weyl's character
formula \cite{JRAG}*{II.5.10}, so in particular $\dim H^0(\lambda) =
\dim V(\lambda)$ is given by the Weyl degree formula.

We record:
\begin{stmt}[\cite{JRAG}*{II.5.8}]
  \label{character-bases}
  The set $\{\ch L \mid \text{$L$ a simple $G$-module}\}$ is a
  $\Z$-basis for $\Z[X^*(T)]^W$.
\end{stmt}

\section{Levi decompositions}
\label{levi-factors-overview}

In this section, we discuss the existence and conjugacy of Levi
factors in linear algebraic groups over a field $k$. We will write $G$ for a
\emph{connected} linear algebraic group defined over $k$.

\subsection{Definitions and first remarks}
Assume that \textbf{(R)} holds for  $G$.
\begin{defn}
  A \textbf{Levi factor} of $G$ is a closed $k$-subgroup $M$ of $G$ such
  that the product mapping
  \begin{equation*}
    (x,y) \mapsto xy:M \ltimes R \to G 
  \end{equation*}
  is a $k$-isomorphism of algebraic groups, where $M \ltimes R$ denotes
  the \emph{semidirect product} (for the action of $M$ on $R$ by
  conjugation).
\end{defn}
If there is such a Levi factor $M$, one says that $G$ has a \emph{Levi
  decomposition} over $k$. If $M$ is a Levi factor of $G$, then the
projection mapping $G \to G/R$ induces an isomorphism $M
\xrightarrow{\sim} G/R$ so that $M$ is connected and reductive.

Suppose for a moment that $k$ has characteristic zero. Apply Levi's
theorem \cite{Jacobson}*{III.9} to the finite dimensional Lie algebra
$\glie = \Lie(G)$ to find a semisimple Lie subalgebra $\mlie \subset
\glie$ such that $\glie = \mlie \oplus \rlie$ where $\rlie$ is the
radical of $\glie$. Now, $[\mlie,\mlie] = \mlie$, so that $\mlie$ is
an \emph{algebraic Lie subalgebra} by \cite{borel-LAG}*{7.9}. This
condition means that there is a closed connected subgroup $M \subset
G$ with $\Lie(M) = \mlie$; evidently, $M$ is semisimple. Choosing a
maximal torus $T_0$ of $M$ and a maximal torus $T$ of $G$ containing
$T_0$, one finds that $M.T$ is a reductive subgroup of $G$ which is a
complement to the unipotent radical $R$. In summary: linear algebraic groups in
characteristic zero always have a Levi decomposition. Moreover, it
follows from Theorem \ref{levi-conjugacy} below that any two Levi
factors of $G$ are conjugate by an element of the unipotent radical.

On the other hand, if $k$ has positive characteristic, the situation
is more complicated.
\begin{itemize}
\item $G$ need not have a Levi factor; see \S
  \ref{sub:levi-failure-Witt} and \S
  \ref{sub:levi-failure-cohomological}.
\item Two Levi factors for $G$ need not be geometrically conjugate;
  see the example given by Borel and Tits in \cite{borel-tits}*{3.15}
  and see \S\ref{unitary-group-example}.
\item Even in the case of an algebraically closed field $k$, in order
  that a subgroup $M \subset G$ be a Levi factor, it is crucial that
  multiplication determine an isomorphism $M \ltimes R \to G$ of
  \emph{algebraic groups} and not merely of the ``abstract'' group of
  points; see the example in \S\ref{abstract-insufficient}.
\end{itemize}

\subsection{Linear algebraic groups with no Levi decomposition: Witt vector construction}
\label{sub:levi-failure-Witt}
Suppose that $k$ is a perfect field of characteristic $p>0$, let
$W(k)$ be the ring of Witt vectors over $k$, and let $W_2(k) =
W(k)/p^2W(k)$ be the ring of length-two Witt vectors
\cite{Serre-LocalFields}*{II.\S6}. One may view $W_2(k)$ as a
\emph{ring-variety} over $k$; as a variety, $W_2(k)$ is just affine
2-space $\bc{\Aff^2}{k}$.

Let $G_{/W_2(k)}$ be a split semisimple group scheme over $W_2(k)$, and let
$G_{/k}$ be the corresponding semisimple algebraic group obtained by
$G_{/k} = G_{/W_2(k)} \tensor_{W_2(k)} k$. Using the point-of-view found in
\cite{conrad-gabber-prasad}*{A.6} -- to which we refer for details
concerning the following sketch --, we may view the group $G(W_2(k))$
as a linear algebraic group over $k$ of dimension $2\cdot \dim G_{/k}$. To
do this, we note that if $X$ is an affine $W_2(k)$-scheme of finite
type, the functor on $k$-algebras
\begin{equation*}
  \Lambda \mapsto X(W_2(\Lambda))
\end{equation*}
is represented by an affine $k$-scheme $X_2$ of finite type; the
assignment $X \mapsto X_2$ is known as the \emph{Greenberg functor}.

Let $H$ be the value of the Greenberg functor at the smooth affine
$W_2(k)$-group scheme $G_{/W_2(k)}$.  The natural ring homomorphism
$W_2 \to k$ determines a surjective mapping of algebraic groups $H \to
G_{/k}$. According to \cite{conrad-gabber-prasad}*{Lemma A.6.2}, the
kernel of the mapping $H \to G_{/k}$ is a vector group which identifies
as a representation for $G_{/k}$ with the Frobenius twist $\glie^{[1]}$ of the 
adjoint representation $\glie$.
\begin{stmt}
  \label{stmt:Witt-non-split} 
  \cite{conrad-gabber-prasad}*{Prop. A.6.4}
  The group $H$ has reductive quotient $G_{/k}$, and $H$
  has no Levi decomposition.
\end{stmt}

\begin{rem}
  \begin{enumerate}[(a)]
  \item In case $G = \SL_2$, one can see also
    \cite{mcninch-witt}*{Remark 5} for a different argument that
    $\SL_2(W_2)$ has no Levi decomposition.
  \item In fact, the example of the 6-dimensional linear group
    $\SL_2(W_2)$ with reductive quotient $\SL_{2/k}$ and no Levi
    factor was communicated in 1967 by J. Tits (who learned it from P.
    Roquette) in a letter to J.  Humphreys in connection with
    Humphreys' paper \cite{humphreys}.
  \end{enumerate}
\end{rem}

\subsection{Complements to the unipotent radical as an abstract group are insufficient}
\label{abstract-insufficient}
We first recall the following:
\begin{stmt}[\cite{borel-LAG}*{14.19}]
  \label{stmt:Levi-of-parabolic}
  Let $P$ be a parabolic subgroup of a reductive group $G$.  Then $P$
  has a Levi factor defined over $k$, and any two Levi factors of
  $P$ defined over $k$ are conjugate by a unique element of
  $R_u(P)(k)$.
\end{stmt}

In defining a Levi factor, the requirement that the product mapping $M
\ltimes R \to G$ be an isomorphism \emph{of algebraic groups} is
essential. Write $\pi:G \to G/R =\Gred$ for the quotient mapping, and
note that if $M$ is a Levi factor, then $\pi_{\mid M}$ is an
\emph{isomorphism of algebraic groups} (see \ref{split-condition}
below).  The following example shows that a linear algebraic group may
have a subgroup $M'$ for which $\pi_{\mid M'}$ is a purely inseparable
isogeny $M' \to \Gred$ -- and in particular determines an isomorphism
of ``abstract'' groups $M'(\kalg) \to\Gred(\kalg)$ for an algebraic
closure $\kalg$ of $k$ -- but $M'$ is not a Levi factor of $G$.

Let $W$ be a $d$ dimensional $k$-vector space for some $d \ge 2$ where
$k$ is an algebraically closed field of characteristic $p > 0$, and
let $V = \Sym^p W$ be the $p$-th symmetric power of $W$.  Consider the
subspace $W^{[1]}$ of $V$ consisting of the $p$-th powers $w^p$ of all
vectors $w \in W$.  The stabilizer $P$ in $\GL(V)$ of the point
determined by $W^{[1]}$ in the Grassmann variety $\Gr_d(V)$ is a
(maximal) parabolic subgroup of $\GL(V)$.

Choose any linear complement $W'$ to $W^{[1]}$ in $V$ and write $M'$
for the reductive subgroup of $P$ generated by the image of $\GL(W)$
under its representation on $V$ together with the subgroup $\GL(W')$
acting as the identity on $W^{[1]}$.

\begin{stmt}
  Write $P_{\red}$ for the reductive quotient of $P$. Then $\pi_{\mid
    M'}:M' \to P_{\red}$ is a purely inseparable isogeny. However, $M'$ is
  not a Levi factor of $P$.
\end{stmt}

\begin{proof}[Sketch]
  Since $M'(k) \cap R_uP(k)$ is trivial, the group of points $M'(k)$
  maps isomorphically to the group of $k$-points of the reductive
  quotient
  \begin{equation*}
    (P/R_uP)(k) \simeq
    \GL(W^{[1]})(k) \times \GL(V/W^{[1]})(k).
  \end{equation*}
  Thus indeed $\pi_{\mid M'}$ is a purely inseparable isogeny.

  Viewing $V$ as a representation for $\GL(W)$, the subspace $W^{[1]}$
  is $\GL(W)$-invariant; the reader may verify that the exact sequence
  of $\GL(W)$-modules
  \begin{equation*}
    0 \to W^{[1]} \to V \to V/W^{[1]} \to 0
  \end{equation*}
  is \emph{not split exact.} In fact, as a representation for
  $\GL(W)$, $V$ identifies with the \emph{standard} $\GL(W)$-module
  $H^0(\Proj(W),\LL^{\tensor p})$ of global sections of the line
  bundle $\LL^{\tensor p}$ on the projective space $\Proj(W)$, where
  $\LL$ is the $\GL(W)$-linearized line bundle whose global sections
  $H^0(\Proj(W),\LL)$ afford the natural $\GL(W)$-module $W$; standard
  modules for $\GL(W)$ are known to be \emph{indecomposable} with
  simple socle \cite{JRAG}*{II.2.3}; in this case $\soc_{\GL(W)}(V) =
  W^{[1]}$ is the first Frobenius twist of the natural module $W$.

  The ``standard'' Levi factor $M = \GL(W^{[1]}) \times \GL(W')$ of
  $P$ leaves invariant a linear complement (namely $W'$) to $W^{[1]}$
  in $V$.  Since there is no $M'$-stable complement to $W^{[1]}$ in
  $V$, $M'$ is not conjugate to the Levi factor $M$. In particular,
  \ref{stmt:Levi-of-parabolic} shows that $M'$ is not a Levi subgroup
  of $P$. (It is in fact quite easy to check that $d\pi_{\mid M'}$ is not an
  isomorphism --  the kernel of this map coincides with the Lie
  algebra of the image of $\GL(W)$.)
\end{proof}

\section{Cohomology and Levi factors}
\label{levi-via-cohomology}

\subsection{The Hochschild complex}
Let $G$ be a linear algebraic group over $k$.  The functor $V \mapsto
H^\bullet(G,V)$ which assigns to $V$ the cohomology groups $H^n(G,V)$
for $n \ge 0$ is the derived functor of the fixed point functor $V
\mapsto V^G$; cf. \cite{JRAG}*{\S I.4}.

Fix a representation  $V$ of $G$. 
\begin{stmt}
  \label{cohomology-base-change} 
  For any commutative $k$-algebra $\Lambda$ and any $n \ge 0$, the
  natural mapping 
  \begin{equation*}
    H^n(G,V) \tensor_k \Lambda \xrightarrow{\sim}
    H^n(G_{/\Lambda},V \tensor_k \Lambda) 
  \end{equation*}
  is an isomorphism.
\end{stmt}

\begin{proof}
  Since $k$ is a field, $V$ is flat over $k$ and the assertion follows
  from \cite{JRAG}*{Prop. I.4.18}.
\end{proof}

The cohomology $H^\bullet(G,V)$ can be computed using the
\emph{Hochschild complex} $C^\bullet(G,V)$; cf.  \cite{JRAG}*{I.4.14}.
Write $C^\bullet = C^\bullet(G,V)$. Then $C^0 = V$, and $C^n =
k[G]^{\tensor n} \tensor_k V$ for $n \ge 1$. When $V$ is finite
dimensional, $V$ may be viewed as an algebraic variety over $k$ and then
$C^n$ is the collection of regular functions $\prod^n G \to V$. The
boundary mappings $\partial^n = \partial^n_V:C^n \to C^{n+1}$ are
described in \cite{JRAG}*{I.4.14}
\begin{stmt}\cite{JRAG}*{Prop I.4.16}
  The group cohomology $H^\bullet(G,V)$ is equal to the cohomology of the
  complex $(C^\bullet(G,V),\partial^\bullet)$.
\end{stmt}

In some arguments to follow, we shall need slightly more than the
Hochschild complex itself.  Following the proof of \cite{JRAG}*{Prop
  I.4.16} one finds:
\begin{stmt}
  \label{injective-res-that-gives-Hochschild}
  There is an injective resolution $V \to I^\bullet(G,V)$ of $V$ as
  $G$-module such that
  \begin{equation*}
    C^\bullet(G,V) = (I^\bullet(G,V))^G.
  \end{equation*}
\end{stmt}

\begin{proof}
  Indeed, as in \cite{JRAG}*{\S I.4.15} one takes
  \begin{equation*}
    I^\bullet(G,V) = C^\bullet(G,k[G]) \tensor V_{\triv}
  \end{equation*}
  where $V_{\triv}$ denotes $V$ with \emph{trivial} $G$-action.  The
  $G$-module structure on
  \begin{equation*}
    I^n(G,V) = k[G]^{\tensor(n+1)} \tensor  V_{\triv}
  \end{equation*}
  is given by the regular representation of $G$ on the \emph{first}
  tensor factor; $G$ acts trivially on all other tensor factors.  The
  boundary mapping $I^n(G,V) \to I^{n+1}(G,V)$ is simply
  $\partial_{k[G]} \tensor 1_V$.

  Now compose the natural mapping $(v \mapsto 1 \tensor v):V \to k[G]
  \tensor V$ with the $G$-isomorphism
  \begin{equation*}
    k[G]\tensor V \xrightarrow{\sim} k[G] \tensor V_{\operatorname{tr}}
  \end{equation*}  
  of \cite{JRAG}*{I.3.7(3)} in order to find the required $G$-module
  mapping $V \to I^0(G,V)$.

  It is proved in \cite{JRAG}*{\S 1.4.15} that $0 \to V \to
  I^\bullet(G,V)$ is exact, and it is clear by construction that the
  complex $C^\bullet(G,V)$ identifies with $(I^\bullet(G,V))^G$.
\end{proof}

\subsection{Parabolic subgroups}
\label{parabolic}

If $H \subset G$ is a (closed) subgroup, and if $V$ is a
representation of $G$, then the co-morphism $j^*:k[G] \to k[H]$ to the
inclusion $j:H \to G$ yields a chain mapping
\begin{equation*}
  \rho:C^\bullet(G,V) \to C^\bullet(H,V) \quad 
\end{equation*}
by the rule $\rho^n = (j^*)^{\tensor n} \tensor 1:k[G]^{\otimes n}
\tensor V \to k[H]^{\otimes n} \tensor V$.  In particular, if $V$ is
finite dimensional and $f \in C^n(G,V)$ is viewed as a regular
function $f:G^n \to V$, then $\rho(f)$ is just the restriction
$f_{\mid H^n}:H^n \to V$.

Moreover,
$\widetilde \rho = \rho \tensor 1_{V_{\triv}}$ determines a chain
mapping
\begin{equation*}
  \widetilde \rho:I^\bullet(G,V) = C^\bullet(G,V) \tensor_k V_{\triv} 
  \to I^\bullet(H,V) = C^\bullet(H,V) \tensor_k V_{\triv}.
\end{equation*}

We will be interested in these chain maps in the case where $H = P$ is
a parabolic subgroup of a reductive group.
\begin{prop}
  Let $G$ be a reductive algebraic group over $k$, let $V$ be a
  representation of $G$, and let $P$ be a parabolic subgroup of $G$.
  Then $\rho$ induces an isomorphism $H^i(G,V) \xrightarrow{\sim}
  H^i(P,V)$ for all $i \ge 0$.
\end{prop}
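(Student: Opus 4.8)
The plan is to realize the isomorphism $H^i(G,V) \simeq H^i(P,V)$ as a consequence of the fact that $G/P$ is a projective variety with trivial higher cohomology of the structure sheaf, so that induction from $P$ to $G$ is exact on $G$-modules. Concretely, first I would invoke the Kempf vanishing / projectivity input: for a parabolic $P \subset G$ one has $R^j\ind_P^G(k) = 0$ for $j > 0$ and $\ind_P^G(k) = k$ (this is $H^j(G/P,\OO_{G/P})$, which vanishes for $j>0$ since $G/P$ is a rational projective variety; see \cite{JRAG}*{II.\S4}). Then for any $G$-module $V$, the tensor identity gives $R^j\ind_P^G(V_{\mid P}) = V \otimes R^j\ind_P^G(k)$, which is $V$ for $j=0$ and $0$ for $j>0$.

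Next I would feed this into the Grothendieck spectral sequence for the composite functor $(-)^G \circ \ind_P^G$, using that $(-)^P = (-)^G \circ \ind_P^G$ as functors on $P$-modules (Frobenius reciprocity plus the fact that $\ind_P^G$ sends injectives to injectives). The spectral sequence
\begin{equation*}
  E_2^{a,b} = H^a(G, R^b\ind_P^G(V_{\mid P})) \Rightarrow H^{a+b}(P, V_{\mid P})
\end{equation*}
degenerates because $R^b\ind_P^G(V_{\mid P}) = 0$ for $b > 0$, leaving the edge isomorphism $H^i(G,V) \xrightarrow{\sim} H^i(P,V)$. To see that this abstract isomorphism is the one induced by the restriction chain map $\rho$, I would observe that the edge map of the spectral sequence is exactly the map induced on cohomology by the unit morphism $V \to \ind_P^G(V_{\mid P})$ (an isomorphism of $G$-modules here), composed with the canonical $(-)^G \to (-)^P$ comparison — and restriction $\rho$ on Hochschild complexes computes precisely this. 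Alternatively, one can avoid identifying the map by noting $\rho$ is a natural transformation of $\delta$-functors in $V$ which is an isomorphism on injectives (since for $V$ injective both sides vanish in positive degree and agree in degree $0$), hence an isomorphism for all $V$ by dimension shifting.

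The main obstacle — and the reason the referee's argument is "much simpler" than the author's original, as noted in the acknowledgments — is pinning down that the natural cohomological isomorphism agrees with the explicitly given restriction map $\rho$ on Hochschild complexes, since the rest of the paper uses the resolutions $I^\bullet(G,V)$ and $I^\bullet(H,V)$ and the chain map $\widetilde\rho$ rather than abstract derived functors. I expect the cleanest route is the $\delta$-functor / dimension-shifting argument just sketched: check that $\rho$ commutes with connecting maps for a short exact sequence $0 \to V \to I \to Q \to 0$ with $I$ injective (immediate from naturality of $\rho$ in the module variable), verify the degree-$0$ and injective cases by hand (using \ref{standard-Weyl-ext}-type vanishing, or just that an injective $G$-module is also injective as $P$-module), and then induct on $i$. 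The purely geometric input (Kempf vanishing for $G/P$) is entirely standard and can be quoted.
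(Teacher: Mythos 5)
Your proposal is correct and rests on the same key input as the paper's proof: Kempf vanishing, in the form of the exactness of $\ind_P^G$ on $G$-modules, which yields the abstract isomorphism $H^i(G,W) \simeq H^i(P,W)$ for all $G$-modules $W$. The paper simply cites this as \cite{JRAG}*{II.4.7} rather than re-deriving it from the Grothendieck spectral sequence as you do, and it handles the degree-zero case by the same local-finiteness observation. Where you genuinely differ is in identifying the abstract isomorphism with the explicit restriction map $\rho$: the paper notes that the cited isomorphism makes each term $I^m(G,V)$ of the injective $G$-resolution acyclic for $(-)^P$, and then applies the hypercohomology comparison theorem of \cite{weibel}*{5.7.7, 5.7.9} to the quasi-isomorphism $\widetilde\rho : I^\bullet(G,V) \to I^\bullet(P,V)$ to conclude that $\rho = (\widetilde\rho)^P$ is a quasi-isomorphism; you instead package the same acyclicity fact as a morphism of $\delta$-functors and dimension-shift. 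Both routes work and are of comparable length. One small caution: your fallback claim that an injective $G$-module restricts to an injective $P$-module is in fact true for parabolics (it is a consequence of Kempf vanishing, due to Cline--Parshall--Scott and Donkin), but it is stronger than what you need and is not delivered by \ref{standard-Weyl-ext}; the weaker statement that injective $G$-modules are $(-)^P$-acyclic follows directly from the abstract isomorphism applied to $W = I^m(G,V)$ and suffices for the dimension shift.
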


\begin{proof}
  Using Kempf's vanishing theorem, it is proved in
  \cite{JRAG}*{II.4.7} that 
  \begin{equation*}
    H^i(G,W) \simeq H^i(P,W) \quad \text{for any
      representation $W$ of $G$.}  
  \end{equation*}
  Note that $H^0(G,W) = W^G \subset W^P = H^0(P,W)$, so that if $W$ is
  finite dimensional it is immediate for dimension reasons that $W^G =
  W^P$. Since any $G$-representation is locally finite
  \cite{JRAG}*{I.2.13}, the equality $W^G = W^P$ in fact holds for
  \emph{all} $G$-representations $W$.

  For $m \ge 0$ and $i>0$, note that $H^i(G,I^m(G,V)) = 0$ since
  $I^m(G,V)$ is injective for $G$. But then $H^i(P,I^m(G,V)) = 0$ by
  the result just cited, so the resolution $I^\bullet(G,V)$ of $V$ is
  acyclic for the functor $(-)^P$. It follows that $H^\bullet(P,V)$
  may be computed as the cohomology of the complex
  $(I^\bullet(G,V))^P$.

  Now, the chain map $\widetilde \rho:I^\bullet(G,V) \to
  I^\bullet(P,V)$ is a quasi-isomorphism; thus
  \cite{weibel}*{Corollary 5.7.7, 5.7.9} shows that $\widetilde \rho$
  induces an isomorphism on hypercohomology
  \begin{equation*}
    \mathbb{H}^\bullet(P,I^\bullet(G,V))
    \to \mathbb{H}^\bullet(P,I^\bullet(P,V)).  
  \end{equation*}
  Since $I^\bullet(G,V)$ and $I^\bullet(P,V)$ are $(-)^P$-acyclic, the
  result just cited also shows that the hypercohomology groups
  coincide respectively with the cohomology of the complexes
  $I^\bullet(G,V)^P$ and $I^\bullet(P,V)^P$; this implies that
  \begin{equation*}
    \rho:C^\bullet(G,M) = (I^\bullet(G,V))^G = (I^\bullet(G,V))^P \to 
    C^\bullet(P,M) = (I^\bullet(P,V)^P
  \end{equation*}
  is a quasi-isomorphism, as required.
\end{proof}

\begin{stmt}
  \label{Z1-surjective}
  Let $P$ be a parabolic subgroup of $G$ and let $V$ be a linear
  representation of $G$.  Then the restriction mapping $(h \mapsto
  h_{\mid P}):Z^1(G,V) \to Z^1(P,V)$ is surjective, where $Z^1(G,V)
  \subset C^1(G,V)$ is the group of \emph{cocycles.}
\end{stmt}

\begin{proof}
  Indeed, if $f \in Z^1(P,V)$, the Proposition shows that there is
  some $h \in Z^1(G,V)$ for which $[f] = [h_{\mid P}]$ in $H^1(P,V)$.
  Thus there is some $v \in V = C^0(P,V)$ for which
  \begin{equation*}
    f = h_{\mid P} + \partial_P(v) 
  \end{equation*}
  where $\partial_P(v):P \to V$ given by $x \mapsto xv - v$.
  Now form the 1-cocycle $h_1 = h + \partial_G(v) \in Z^1(G,V)$
  where $\partial_G(v):G \to V$ is given by $g \mapsto gv-v$.
Then evidently $h_{1\mid P} = f$.
\end{proof}

\subsection{Group extensions}
\label{group-extensions}
When $V$ is a finite dimensional vector space (say, underlying a
module for an algebraic group), we will sometimes view $V$ as a
\emph{vector group} as in \S \ref{complement-to-R}; in other words, we
view $V$ as the linear algebraic group over $k$ whose functor of
points is given by the rule $V(\Lambda) = V \tensor_k \Lambda$ for
each commutative $k$-algebra $\Lambda$

Let $G$ be a linear algebraic group over $k$, and let $V$ be a finite
dimensional $G$-module. By an \emph{extension} of $G$ by $V$, we mean
a \emph{strictly exact} sequence of algebraic groups (see
\S\ref{terminology} for the terminology)
\begin{equation*}
  (*) \quad 0 \to V \xrightarrow{i} H \xrightarrow{\pi} G \to 1.
\end{equation*}  
We occasionally abuse terminology and refer to the extension $(*)$
simply as $H$, but the reader should be aware that $i$ and $\pi$ are
``part of the data'' of an extension of $G$ by $V$.

\begin{stmt}
  \label{split-condition}
  Consider an extension $H$ of $G$ by $V$ given by a sequence $(*)$.
  Let $C$ be a closed subgroup of $H$.  The following conditions are
  equivalent:
  \begin{enumerate}[(a)]
  \item Multiplication is an isomorphism $\mu:C \ltimes V \to H$.
  \item $\pi_{\mid C}:C \to G$ is an isomorphism of algebraic groups.
  \end{enumerate}
\end{stmt}

\begin{proof}
  Given (a), the map $\pi \circ \mu:C \ltimes V \to G$ is a separable
  surjection with kernel $\mu^{-1}(V) = V$, so that $\pi_{\mid C} =
  (\pi \circ \mu)_{\mid C}$ is an isomorphism; i.e. (b) holds.

  Given (b), set $\sigma = i \circ (\pi_{\mid C})^{-1}$, where $i$ is
  the inclusion $i:C \to H$. Then the map $H \to C \ltimes V$ given by
  $h \mapsto (\sigma \circ \pi(h),(\sigma \circ \pi(h))^{-1}h)$
  inverts $\mu$ so that (a) holds.
\end{proof}

A subgroup $C$ as in \ref{split-condition} is a \emph{complement to
  $V$ in $H$}.  We say that an extension $(*)$ of $G$ by $V$ is
\emph{split} if there is a complement to $V$ in $H$.

\subsection{Cohomology and group extensions}
\label{cohomology-and-group-extensions}
Let $V$ be a representation of the linear algebraic group $G$. We relate in this
section the cohomology group $H^2(G,V)$ and the extensions of $G$ by
the vector group $V$ in the sense of \S \ref{group-extensions}.

Consider an extension $H$ of $G$ by $V$ as in $(*)$ of \S
\ref{group-extensions}. Since the vector group $V$ is $k$-split
unipotent, Proposition \ref{complement-to-R} shows that there is a
regular function $s:G \to H$ which is a section to $\pi$.  Consider
now the regular mapping $\alpha_H:G\times G \to V$ defined by
\begin{equation*}
  s(g)s(g') = i(\alpha_H(g,g'))s(gg')
\end{equation*}
for all $g,g' \in G(\Lambda)$ and all commutative $k$-algebras $\Lambda$.

On the other hand, given a cohomology class $[\alpha] \in H^2(G,V)$
represented by a 2-cocycle $\alpha:G \times G \to V$, form 
\begin{equation*}
  E_\alpha = G \times V
\end{equation*}
with the operation $E_\alpha \times E_\alpha \to E_\alpha$ given by
\begin{equation*}
  (g,v)\cdot (g',v') = (gg', v + v' + \alpha(g,g')).
\end{equation*}
Let $i:V \to E_\alpha$ be the inclusion of the left-hand factor, and let
$\pi:E_\alpha \to G$ be the projection on the right-hand factor.

\begin{prop}
  \begin{enumerate}[(a)]
  \item For any extension $H$ of $G$ by $V$, we have $\alpha_H \in
    Z^2(G,V) \subset C^2(G,V)$, and the class $[\alpha_H]$ of
    $\alpha_H$ in $H^2(G,V)$ is independent of choices.
  \item The extension $H$ of $G$ by $V$ is \emph{split} if and only if
    $[\alpha_H] = 0$ in $H^2(G,V)$.
  \item For a 2-cocycle $\alpha \in Z^2(G,V)$, the indicated operation
    makes the variety $E=E_\alpha$ into a linear algebraic group which
    forms an extension of $G$ by $V$.  Moreover, $[\alpha_E] =
    [\alpha]$ in $H^2(G,V)$.
  \end{enumerate}
\end{prop}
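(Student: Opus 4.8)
The plan is to verify the three clauses in the natural order, treating (a) and (c) as the two ends of a dictionary between extensions and $2$-cocycles, and (b) as the compatibility statement that ties them to the splitting criterion of \ref{split-condition}. Throughout I would phrase everything functorially, evaluating on an arbitrary commutative $k$-algebra $\Lambda$, so that the identities below are identities of regular maps; since $V$, $G$, $H$, $E_\alpha$ are all of finite type and reduced, checking an identity of morphisms is the same as checking it on $\Lambda$-points for all $\Lambda$.

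For (a), I would start from the section $s\colon G\to H$ to $\pi$ produced by Proposition \ref{complement-to-R}, normalized so that $s(1)=1$ (replace $s$ by $s(1)^{-1}s$), and compute the defining relation $s(g)s(g')=i(\alpha_H(g,g'))s(gg')$ in two ways inside the triple product $s(g)s(g')s(g'')$. Associativity of multiplication in $H$, together with the fact that $i(V)$ is central-up-to-the-$G$-action — more precisely $s(g)i(v)s(g)^{-1}=i(g\cdot v)$, which holds because conjugation by $s(g)$ induces on $i(V)\cong V$ the given $G$-action via $\pi(s(g))=g$ — yields exactly the $2$-cocycle identity $\partial^2\alpha_H=0$ in the Hochschild complex $C^\bullet(G,V)$ of \S\ref{parabolic}; here one must match the sign conventions of \cite{JRAG}*{I.4.14}. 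For independence of choices: a second normalized section $s'$ differs from $s$ by $s'(g)=i(\beta(g))s(g)$ for a regular map $\beta\colon G\to V$ with $\beta(1)=0$, i.e. $\beta\in C^1(G,V)$, and substituting shows $\alpha_{H'}=\alpha_H+\partial^1\beta$, so $[\alpha_H]$ is well-defined in $H^2(G,V)$.

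For (b), if the extension is split by a complement $C$, then by \ref{split-condition} the map $\sigma=i\circ(\pi_{\mid C})^{-1}\colon G\to H$ is a \emph{homomorphism} of algebraic groups that is also a section to $\pi$; using $\sigma$ in place of $s$ in the defining relation gives $\alpha_H=0$ on the nose, hence $[\alpha_H]=0$. Conversely, if $[\alpha_H]=0$, choose $\beta\in C^1(G,V)$ with $\alpha_H=\partial^1\beta$ (normalizing $\beta(1)=0$ using that $\alpha_H(1,1)=0$); then $s'=i(\beta)\cdot s$ has $\alpha_{H'}=0$, which says precisely that $s'$ is a group homomorphism, so $C=s'(G)$ is a closed subgroup with $\pi_{\mid C}$ an isomorphism, i.e. a complement. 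For (c), one checks directly that the operation on $E_\alpha=G\times V$ is associative (this is again the cocycle identity $\partial^2\alpha=0$), that $(1,-\alpha(1,1))$ — or $(1,0)$ after normalizing $\alpha$ by a coboundary so that $\alpha(1,-)=\alpha(-,1)=0$ — is a two-sided identity, that inverses exist and are regular, and that $i$ and $\pi$ are morphisms of algebraic groups making $0\to V\to E_\alpha\to G\to 1$ strictly exact ($\pi$ is a projection hence separable with kernel $i(V)\cong V$; surjectivity on $\kalg$-points is clear). Finally, taking the obvious section $s(g)=(g,0)$ of $\pi\colon E_\alpha\to G$, the defining relation gives $\alpha_{E_\alpha}=\alpha$ exactly, so $[\alpha_{E_\alpha}]=[\alpha]$.

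The main obstacle is bookkeeping rather than conceptual: getting the sign and index conventions in the Hochschild differential $\partial^\bullet$ of \cite{JRAG}*{I.4.14} to line up with the extension data, and keeping careful track of the normalization $s(1)=1$ (and the corresponding $\alpha(1,-)=\alpha(-,1)=0$, valid after adjusting by a coboundary) so that the identity element and inverses in $E_\alpha$ are genuinely regular and the maps $V\to H$, $V\to E_\alpha$ are closed immersions of group schemes. The one point that genuinely uses more than formal group theory is that the section $s$ exists \emph{as a morphism of varieties}, which is exactly the content of Proposition \ref{complement-to-R} applied to the $k$-split unipotent normal subgroup $V\subset H$; everything else is a finite computation with regular functions.
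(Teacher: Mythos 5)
Your argument is correct, but it is worth noting that the paper does not actually prove this Proposition: its entire proof is the citation \cite{demazure-gabriel}*{II\S3.2.3}. What you have written is, in effect, a self-contained version of the standard dictionary that the cited reference establishes, and all the essential points are in place: the existence of a regular (not homomorphic) section via Rosenlicht's theorem applied to the $k$-split vector group $V$, the derivation of the cocycle identity from associativity together with $s(g)i(v)s(g)^{-1}=i(g\cdot v)$, the coboundary computation for a change of section, and the normalization issues needed to make $E_\alpha$ a bona fide algebraic group with $i$ a closed immersion of group schemes. The trade-off is the usual one: the citation is shorter, while your version makes visible exactly where the hypothesis that $V$ is split unipotent (hence that a regular section exists) enters. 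One caution for part (c): the multiplication displayed in the paper, $(g,v)\cdot(g',v')=(gg',v+v'+\alpha(g,g'))$, omits the twist by the $G$-action and should read $(gg',\,v+g\cdot v'+\alpha(g,g'))$; with the formula as printed, associativity is the cocycle identity for the \emph{trivial} action, not $\partial^2\alpha=0$ in $C^\bullet(G,V)$, and the induced conjugation action of $G$ on $V$ inside $E_\alpha$ would be trivial, contradicting the requirement that $E_\alpha$ be an extension of $G$ by the given module $V$. Your verification goes through verbatim once that (surely typographical) correction is made, so you should state the corrected formula explicitly rather than inherit it from the display.
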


\begin{proof}
   \cite{demazure-gabriel}*{II\S3.2.3} 
\end{proof}

\begin{rem}
  \begin{enumerate}[(a)]
  \item Let $G$ be a split, semisimple group over a perfect field $k$.
    Recall that by \ref{stmt:Witt-non-split}, there is a non-trivial
    extension of a $G$ by the Frobenius twist $\glie^{[1]}$ of its Lie
    algebra, viewed as a vector group with $G$-action.  Thus, the
    Proposition shows that $H^2(G_{/k},\glie^{[1]}) \ne 0$ for any
    split semisimple algebraic group $G$ over $k$.
  \item  Using the non-vanishing of
    a certain second cohomology group, we give in \S
    \ref{sub:levi-failure-cohomological} an example  of a
    linear algebraic group with no Levi decomposition
  \end{enumerate}
\end{rem}

\subsection{Conjugacy of complements}
Let $V$ be a representation for the linear algebraic group $G$, and form the
semidirect product $H = G \ltimes V$ which we view as the split
extension of $G$ by $V$. Write $\pi:H \to G$ for the quotient
homomorphism $(g,v) \mapsto g$, write $\sigma_0:G \to H$ for the
inclusion $g \mapsto (g,0)$ of $G$ in $H$, write $i:V \to H$ for the
inclusion $v \mapsto (1,v)$ of $V$ in $H$, and write $\psi:H = G
\ltimes V \to V$ for the regular map of varieties $(g,v) \mapsto v$;
i.e. $\psi$ is given by projection on the $V$ factor.

Let   
$\Sigma = \{ \sigma:G \to H \mid \text{$\sigma$ is both a section to $\pi$ and 
       a homomorphism of algebraic groups}\}.$
\begin{stmt}
  \label{param-sections}
  The assignment $\sigma \mapsto \psi \circ \sigma$ determines a
  bijection $\Sigma \to Z^1(G,V)$ where $Z^1(G,V)$ is the set of
  co-cycles in $C^1(G,V)$.
\end{stmt}

\begin{proof}
  Given $\sigma \in \Sigma$, one easily checks that $f = \psi \circ \sigma
  \in Z^1(G,V)$. Conversely, given $f \in Z^1(G,V)$ we may
  form the section $\sigma_f:G \to H = G \ltimes V$ given by the rule
  \begin{equation*}
    g \mapsto (g,f(g));
  \end{equation*}
  the co-cycle condition implies that $\sigma$ is a homomorphism of
  algebraic groups.

  It is clear that the assignments $\sigma \mapsto \psi \circ \sigma$
  and $f \mapsto \sigma_f$ are inverse to one another.
\end{proof}

\begin{stmt}
  \label{conj-of-complements}
  Let $\sigma \in \Sigma$, and let $f = \psi \circ \sigma \in
  Z^1(G,V)$ be the corresponding co-cycle as in \ref{param-sections}.
  The following are equivalent:
  \begin{enumerate}[(a)]
  \item $[f]=0$ in $H^1(G,V)$.
  \item The sections $\sigma,\sigma_0 \in \Sigma$ are conjugate by an
    element $i(v) \in H(k)$ for some $v \in V = V(k)$
  \item The sections $\sigma,\sigma_0 \in \Sigma$ are conjugate by an
    element of $H(\kalg)$ where $\kalg$ is an algebraic closure of
    $k$.
  \end{enumerate}
\end{stmt}

\begin{proof}
  By definition, the condition $[f]=0$ in $H^1(G,V)$ is equivalent to
  the condition $f = \partial v$ for some $v \in V = C^0(G,V)$ which
  simply means that $f:G \to V$ is given by the rule $f(g) = gv - v$.
  Now, for each commutative $k$-algebra $\Lambda$ and each $g \in
  G(\Lambda)$ we have
  \begin{equation*}
    i(v)\cdot \sigma_0(g) \cdot i(-v) = (1,v)(g,0)(1,-v) 
    = (g,gv - v) = (g,f(g)) = \sigma(g);
  \end{equation*}
  this proves the equivalence of (a) and (b).

  Evidently (b) implies (c). If (c) holds, say $\sigma =
  \Int(h)\sigma_0$ for $h \in H(\kalg)$, we may write $h = (g,v)$ for
  $v \in V(\kalg)$ and $g \in G(\kalg)$. Since $1_G = \pi \circ \sigma
  = \pi \circ \sigma_0$, evidently $g$ is \emph{central} so that
  in fact $\sigma = \Int(i(v))\sigma_0$.  But then $[f] = 0$ in
  $H^1(G_{/\kalg},V_{/\kalg})$ by the equivalence of (b) and (a) for
  $G_{/\kalg}$, whence (a) by application of
  \ref{cohomology-base-change}.
\end{proof}

\section{Some cohomological criteria for existence and conjugacy of Levi factors}
\label{Levi-criteria}
Throughout this section, $G$ will denote a \emph{connected} linear
algebraic group.  \emph{We suppose throughout \S \ref{Levi-criteria}
  that condition \textbf{(L)} of \S \ref{linearizable} holds for $G$.}

We fix a linearizable splitting sequence $1 = R_n \subset R_{n-1}
\subset \cdots \subset R_0 = R$ for $R$ and write $V_i$ for the linear
$\Gred$-representations for which there are $\Gred$-equivariant
isomorphisms $V_i \simeq R_i/R_{i-1}$.

\subsection{A cohomological criteria for conjugacy of Levi factors}
\label{levi-conjugacy}

\begin{theorem}
  Suppose that $G$ has a Levi factor, and suppose that
    \begin{equation*}
      H^1(\Gred,V_i) = 0 \quad \text{for} \quad 0 \le i \le n-1.  
    \end{equation*}
    Then any two Levi factors $L,L'$ of $G$ are
    conjugate by an element of $R(k)$.
\end{theorem}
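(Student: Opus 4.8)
The plan is to induct on the length $n$ of the fixed linearizable splitting sequence, peeling off the top term $R_{n-1}$, which is a vector group $V_{n-1}$ with linear $\Gred$-action and $H^1(\Gred, V_{n-1}) = 0$. First I would dispose of the base case $n=1$: here $R = R_0 = V_0$ is itself a vector group on which $\Gred$ acts linearly, so $G$ is an extension of $\Gred$ by $V_0$. Choosing a Levi factor $L_0$ identifies $G$ with the split extension $\Gred \ltimes V_0$, and then a second Levi factor $L$ is exactly the image of a section $\sigma \in \Sigma$ in the notation of \S\ref{conj-of-complements}. By \ref{param-sections} and \ref{conj-of-complements}, since $H^1(\Gred,V_0)=0$ the corresponding cocycle is a coboundary, so $L = L_0$ are conjugate by an element $i(v)$ with $v \in V_0 = R(k)$, as required.

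For the inductive step, assume $n \ge 2$ and set $N = R_{n-1} = V_{n-1}$, which is normal in $G$. The quotient $\bar G = G/N$ is again a connected linear algebraic group: its unipotent radical is $\bar R = R/N$, and the images $\bar R_i = R_i/N$ for $0 \le i \le n-1$ give a linearizable splitting sequence of length $n-1$ with the same quotients $V_0, \dots, V_{n-2}$, so condition \textbf{(L)} holds for $\bar G$ and the hypothesis $H^1(\bar G_{\red}, V_i)=0$ is inherited (note $\bar G_{\red} = \Gred$). Given two Levi factors $L, L'$ of $G$, their images $\bar L, \bar L'$ in $\bar G$ are Levi factors of $\bar G$ (the composite $L \hookrightarrow G \to \bar G$ is injective since $L \cap N \subset L \cap R = 1$, and it is separable with image a reductive complement to $\bar R$). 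By the inductive hypothesis there is $\bar u \in \bar R(k)$ with $\Int(\bar u) \bar L = \bar L'$; using that $\pi : R(k) \to \bar R(k)$ is surjective (Proposition in \S\ref{complement-to-R}, applied to $R$ which is $k$-split, or directly since $R/N$-fibration over the vector group $N$ is split), lift $\bar u$ to $u \in R(k)$ and replace $L$ by $\Int(u) L$; thus we may assume $\bar L = \bar L'$.

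Now $L$ and $L'$ both lie in the preimage $H := \pi^{-1}(\bar L) \subset G$, which is an extension $0 \to N \to H \to \bar L \to 1$ with $N = V_{n-1}$ a vector group carrying a linear $\bar L$-action (restrict the $\Gred$-action along $\bar L \xrightarrow{\sim} \Gred$). Each of $L, L'$ maps isomorphically to $\bar L$ by $\pi$, so by \ref{split-condition} each is a complement to $N$ in $H$; in particular $H$ is a split extension. Identifying $H$ with $\bar L \ltimes N$ via $L$, the subgroup $L'$ becomes the image of a section in $\Sigma$ (for the group $\bar L$ and module $N$), and since $H^1(\bar L, N) = H^1(\Gred, V_{n-1}) = 0$, \ref{param-sections} together with \ref{conj-of-complements} gives $v \in N(k) = N \subset R(k)$ with $\Int(i(v)) L = L'$. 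Composing with the earlier conjugation by $u \in R(k)$, the original $L$ and $L'$ are conjugate by an element of $R(k)$, completing the induction.

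The main obstacle I anticipate is bookkeeping rather than a deep point: one must check carefully that passing to the quotient $\bar G = G/N$ preserves condition \textbf{(L)} with the \emph{same} reductive quotient $\Gred$ and the \emph{same} modules $V_0,\dots,V_{n-2}$, and that images and preimages of Levi factors behave as claimed (separability of the relevant maps, and the identification $H = \pi^{-1}(\bar L)$ with a split extension of $\bar L \cong \Gred$ by $V_{n-1}$). The surjectivity of $R(k) \to \bar R(k)$ needed to lift $\bar u$ is exactly what the Rosenlicht-based Proposition of \S\ref{complement-to-R} supplies, so no extra hypothesis on $k$ beyond what is already in force is needed.
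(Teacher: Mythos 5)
Your proof is correct and follows essentially the same strategy as the paper's: induction on the length of the linearizable splitting sequence, with the one-layer case handled by \ref{param-sections} and \ref{conj-of-complements}, the conjugating element lifted via the Rosenlicht surjectivity of Proposition \ref{complement-to-R}, and the preimage of the common image serving as the smaller group for the remaining step. The only (cosmetic) difference is that you peel off the innermost term $R_{n-1}$ and apply induction to $G/R_{n-1}$, whereas the paper applies the base case to $G/R_1$ first and then inducts on $\pi^{-1}(\overline{L})$; both orderings work equally well.
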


\begin{proof}
  Proceed by induction on the length $n$ of a linearizable splitting
  sequence for $R$.  In case $n=1$, the result follows from
  \ref{conj-of-complements}.

  Now let $n>1$ and suppose the result holds for groups having
  linearizable splitting sequences of length $<n$.  Let $L,L' \subset G$ be Levi
  factors.

  The images $\overline{L},\overline{L'}$ of the Levi factors of
  $G$ in $G/R_1$  are Levi factor of $G/R_1$. Identifying
  $V_0$ and $R/R_1$, we view $G/R_1$ as an extension
  \begin{equation*}
    0 \to V_0 \to G/R_1 \to \Gred \to 1
  \end{equation*}
  and it follows from \ref{conj-of-complements} (i.e. the case $n=1$)
  that there is $v \in V_0$ such that $\overline{L} =
  \Int(i(v))\overline{L'}$.  Now, the morphism $R(k) \to
  (R/R_1)(k) = V_0$ is surjective by Proposition
  \ref{complement-to-R}. Choosing an $x \in R(k)$ which maps to $v \in V_0$
  and replacing $L'$ by $\Int(x)L'$ we may suppose that $\overline{L}
  = \overline{L'}$.

  Let $\pi:G \to G/R_1$ be the quotient mapping, and let $M =
  \pi^{-1}(\pi(L)) = \pi^{-1}(\overline{L})$. Then $M$ is an extension
  \begin{equation*}
    1 \to R_1 \to M \to \Gred \to 1;
  \end{equation*}
  since $\overline{L} = \overline{L'}$, we have $L,L' \subset M$ so
  that both $L$ and $L'$ are Levi factors of $M$. Since a splitting
  sequence for the unipotent radical $R_1$ of $M$ has length
  $n-1$, we may apply induction to learn that $L$ and $L'$ are
  conjugate by an element of $R_1(k)$.
\end{proof}

\subsection{A cohomological criteria for the existence of a Levi
  factor}
\label{easy-cohomology-existence}

\begin{theorem}
  Suppose that
    \begin{equation*}
      H^2(\Gred,V_i) = 0 \quad \text{for} \quad 0 \le i \le n-1.  
    \end{equation*}
    Then $G$ has a Levi factor.
\end{theorem}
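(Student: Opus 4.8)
The plan is to argue by induction on the length $n$ of the fixed linearizable splitting sequence $1 = R_n \subset \cdots \subset R_0 = R$, mirroring the structure of the proof of Theorem~\ref{levi-conjugacy}. The base case $n=1$ is exactly the situation treated in \S\ref{cohomology-and-group-extensions}: here $R = R_0 = V_0$ is a vector group with a linear $\Gred$-action, so $G$ is an extension $0 \to V_0 \to G \xrightarrow{\pi} \Gred \to 1$ in the sense of \S\ref{group-extensions}. By the Proposition of \S\ref{cohomology-and-group-extensions}, this extension is split exactly when the class $[\alpha_G] \in H^2(\Gred, V_0)$ vanishes; since $H^2(\Gred, V_0) = 0$ by hypothesis, a complement $C$ to $V_0$ exists, and by \ref{split-condition} such a $C$ is precisely a Levi factor of $G$.

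For the inductive step, suppose $n > 1$ and that the result holds for all groups admitting a linearizable splitting sequence of length $< n$. First I would pass to the quotient $G/R_{n-1}$. Its unipotent radical is $R/R_{n-1}$, and the images $R_i/R_{n-1}$ give a linearizable splitting sequence of length $n-1$ with the same quotient groups $V_i$ for $0 \le i \le n-2$; the hypothesis $H^2(\Gred, V_i) = 0$ is inherited, so by induction $G/R_{n-1}$ has a Levi factor $\overline{L}$, which is a reductive subgroup mapping isomorphically onto $\Gred$. Next, let $\pi: G \to G/R_{n-1}$ be the quotient map and set $M = \pi^{-1}(\overline{L})$. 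Then $M$ is an extension $1 \to R_{n-1} \to M \to \Gred \to 1$; since $R_{n-1} = V_{n-1}$ is a vector group with linear $\Gred$-action, this is an extension of the base-case type, and $H^2(\Gred, V_{n-1}) = 0$ produces a complement $L \subset M$ to $R_{n-1}$. I would then check that $L$, regarded as a subgroup of $G$, is a Levi factor of $G$: the composite $L \hookrightarrow M \hookrightarrow G \xrightarrow{} G/R = \Gred$ factors as $L \xrightarrow{\sim} \overline{L} \xrightarrow{\sim} \Gred$ (the first isomorphism because $L$ is a complement to $R_{n-1}$ in $M$, the second because $\overline{L}$ is a Levi factor of $G/R_{n-1}$), so $\pi_{G/R}|_L$ is an isomorphism, and by \ref{split-condition} applied to the extension $0 \to R \to G \to \Gred \to 1$ — here one uses that $R$ is $k$-split unipotent, so the hypotheses of \ref{split-condition} are met once one knows the product map is an isomorphism, which follows from $\pi|_L$ being an isomorphism — $L$ is a Levi factor of $G$.

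The one point requiring a little care, and the most likely obstacle, is the very last verification: \ref{split-condition} as stated concerns extensions of $G$ by a \emph{vector group} $V$, whereas $0 \to R \to G \to \Gred \to 1$ has $k$-split unipotent (not necessarily vector-group) kernel. I would handle this either by observing that the equivalence of (a) and (b) in \ref{split-condition} goes through verbatim for any normal $k$-split unipotent $R$ — the only role of $V$ being vector-group is irrelevant to that argument — or, alternatively, by iterating: $L \cap R_i$ gives complements at each stage of the filtration and one assembles the global splitting from the successive ones. I expect the former to be cleanest, and it is essentially immediate, so no serious difficulty should arise.
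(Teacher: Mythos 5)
Your proof is correct and follows essentially the same route as the paper's (which is only sketched there): induction on the length $n$ of the linearizable splitting sequence, with the base case supplied by the Proposition of \S\ref{cohomology-and-group-extensions} and the inductive step modelled on that of Theorem \ref{levi-conjugacy}. The only immaterial difference is that you peel off the bottom layer $R_{n-1}$ and induct on the quotient $G/R_{n-1}$, whereas the paper's pattern quotients by $R_1$ and applies induction to the subgroup $\pi^{-1}(\overline{L})$; your closing observation that \ref{split-condition} extends verbatim to a $k$-split unipotent kernel is also right, and is implicitly relied on by the paper itself.
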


\begin{proof}[Sketch:]
  One proves the result by induction on the length $n$ of a
  linearizable splitting sequence for $R$; the result in case $n=1$
  follows from Proposition \ref{cohomology-and-group-extensions}.  The
  induction step is similar to -- in fact, easier than -- that given
  in the proof of Theorem \ref{levi-conjugacy}; details are left to
  the reader.
\end{proof}

\subsection{Dimensional criteria for existence and conjugacy of Levi factors}
\label{small-modules}
Let $H$ be a reductive algebraic group over $k$.  A result of Jantzen
permits some control on the cohomology of $H$-modules whose dimension
is small relative to the size of the characteristic of $k$, as follows:
\begin{stmt}[Jantzen \cite{jantzen-semisimple}]
  \label{jantzen-CR}
  If $V$ is an $H$-module with $\dim V \le p$, then $V$ is completely reducible.
\end{stmt}

\begin{stmt}
  \label{jantzen-H1}
  If $V$ is a $H$-module with $\dim V < p$, then $H^1(H,V) = 0$.
\end{stmt}

\begin{proof}
  Using \ref{cohomology-base-change} we may as well suppose that $k$
  is algebraic closed. Suppose that 
  \begin{equation*}
    0 \ne H^1(H,V)  = \Ext^1_H(k,V).
  \end{equation*}
  Then a non-zero
  cohomology class determines a non-split extension of $G$-modules
  \begin{equation*}
    0 \to V \to E \to k \to 0;
  \end{equation*}
  in particular, the $H$-module $E$ is not completely reducible.  Now
  \ref{jantzen-CR} implies that $\dim E > p$ so that $\dim V \ge p$.
\end{proof}

\begin{stmt}
  \label{jantzen-higher-coh}
  Let $V$ be an $H$-module with $\dim V \le p$ and suppose that there
  are dominant weights $\mu_1,\dots,\mu_n$ such that the character of
  $V$ satisfies $\ch V = \sum_j \chi(\mu_j)$.
  Then $H^i(H,V) = 0$ for $i \ge 1$.
\end{stmt}

\begin{proof}
  The hypotheses imply that $\dim H^0(\mu_j) \le p$ for all $j$. Now
  Janzten's result \ref{jantzen-CR} implies that $V$ together with
  each $H^0(\mu_j)$ is completely reducible. In particular,
  $H^0(\mu_j) = L(\mu_j)$ is simple. Thus \ref{character-bases}
  implies that the composition factors of $V$ are precisely the
  $L(\mu_j)$. In turn, the complete reducibility of $V$ shows that $V$
  is the direct sum $V = \bigoplus_j L(\mu_j) = \bigoplus_j
  H^0(\mu_j)$, and the result now follows from \ref{standard-Weyl-ext}.
\end{proof}

\begin{theorem}
  Suppose that condition \textbf{(L)} holds for $G$ and write $p$ for
  the characteristic of $k$. Let $1 = R_n \subset R_{n-1} \subset \cdots
  \subset R_0 = R$ be a linearizable splitting sequence for $R$, and
  suppose that $R_i/R_{i-1} \simeq V_i$ for $G$-modules $V_i$.
  \begin{enumerate}[(a)]
  \item Suppose that $\dim R < p$. If $G$ has a Levi factor, then any
    two Levi factors are conjugate by an element of $R(k)$.
  \item Suppose that $\dim R \le p$. If there are dominant weights
    $\mu_1,\dots,\mu_m$ for $\Gred$ such that 
    \begin{equation*}
      \sum_{j=0}^{n-1} \ch V_j = \sum_{i=1}^m
      \chi(\mu_i) 
    \end{equation*}
    then $G$ has a Levi factor.
  \end{enumerate}
\end{theorem}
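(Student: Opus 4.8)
The plan is to prove this theorem by reducing both assertions to the corresponding cohomological criteria already established, namely Theorems~\ref{levi-conjugacy} and~\ref{easy-cohomology-existence}, by verifying that the dimension hypotheses force the vanishing of the relevant cohomology groups $H^1(\Gred,V_i)$ (for part (a)) and $H^2(\Gred,V_i)$ (for part (b)). First I would apply \ref{cohomology-base-change} to reduce to the case where $k$ is algebraically closed, so that the representation theory of \S\ref{representation-theory} and Jantzen's theorem \ref{jantzen-CR} become available verbatim for $\Gred$.

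For part (a), the key observation is that $\dim R < p$ together with $R_i/R_{i-1} \simeq V_i$ gives $\dim V_i \le \dim R < p$ for each $i$ (since the $V_i$ are the consecutive quotients of the splitting sequence, each has dimension at most $\dim R$). Then \ref{jantzen-H1} immediately yields $H^1(\Gred,V_i) = 0$ for $0 \le i \le n-1$, so that the hypotheses of Theorem~\ref{levi-conjugacy} are satisfied, and we conclude that if $G$ has a Levi factor then any two Levi factors are $R(k)$-conjugate. This part is essentially immediate once the reduction to algebraically closed $k$ is in place.

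For part (b), I would argue as follows. The direct sum $\bigoplus_{j=0}^{n-1} V_j$ is a $\Gred$-module of dimension $\sum_j \dim V_j = \dim R \le p$, and its character is $\sum_j \ch V_j = \sum_{i=1}^m \chi(\mu_i)$ by hypothesis. Applying \ref{jantzen-higher-coh} to this module $V = \bigoplus_j V_j$, we get $H^i(\Gred, \bigoplus_j V_j) = 0$ for all $i \ge 1$; in particular, since cohomology commutes with finite direct sums, $H^2(\Gred,V_j) = 0$ for each $0 \le j \le n-1$. This is precisely the hypothesis of Theorem~\ref{easy-cohomology-existence}, from which the existence of a Levi factor of $G$ follows.

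I expect the only subtle point — and hence the main thing to be careful about — is the reduction to algebraically closed ground field: one needs that condition \textbf{(L)} and the $\Gred$-module structure on the $V_i$ base-change correctly, that the notion of "Levi factor" is compatible with base change to $\kalg$ in both directions (for part (a), one must know that two Levi factors conjugate over $\kalg$ can be traced back, or better, apply Theorem~\ref{levi-conjugacy} over $k$ directly once the cohomology vanishing over $k$ is deduced from the vanishing over $\kalg$ via \ref{cohomology-base-change}), and that the character identity in (b), which is a statement in $\Z[X^*(T)]$, is unaffected by enlarging $k$. In fact, since Theorems~\ref{levi-conjugacy} and~\ref{easy-cohomology-existence} are stated over the original field $k$ and only require vanishing of $H^\bullet(\Gred,V_i)$, and since \ref{cohomology-base-change} shows $H^n(\Gred,V_i) = 0$ if and only if $H^n((\Gred)_{/\kalg},(V_i)_{/\kalg}) = 0$, the cleanest route is: pass to $\kalg$ only to invoke Jantzen's results, deduce the vanishing there, descend the vanishing to $k$, and then apply the two earlier theorems over $k$ without further ado.
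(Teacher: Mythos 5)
Your proposal is correct and follows essentially the same route as the paper: part (a) is exactly \ref{jantzen-H1} combined with Theorem \ref{levi-conjugacy}, and part (b) is \ref{jantzen-higher-coh} (applied, as you do, to $\bigoplus_j V_j$) combined with Theorem \ref{easy-cohomology-existence}. The base-change concerns you raise are already absorbed into the proofs of \ref{jantzen-H1} and \ref{jantzen-higher-coh}, so no further care is needed at this stage.
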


\begin{proof}
  Assertion (a) is a consequence of \ref{jantzen-H1} and Theorem
  \ref{levi-conjugacy}, and assertion (b) is a consequence of
  \ref{jantzen-higher-coh} and Theorem
  \ref{easy-cohomology-existence}.
\end{proof}

\begin{rem}
  Suppose that the reductive group $H$ has \emph{absolutely
    quasi-simple} derived group and semisimple rank $r$.  The main
  result of \cite{mcninch-dim-crit} shows that any $H$-representation
  with $\dim V \le rp$ is completely reducible; this leads to obvious
  analogues of \ref{jantzen-H1} and \ref{jantzen-higher-coh}. If $H$
  is isomorphic to the reductive quotient of $G$, the conclusion of
  the preceding Theorem remains valid after replacing in (a) the
  condition ``$\dim R < p$'' by the condition ``$\dim R < rp$'' and in
  (b) the condition ``$\dim R \le p$'' by the condition ``$\dim R \le
  rp$''.
\end{rem}
\subsection{Another criteria for the existence of a Levi factor}

\emph{Suppose in this section that $k$ is separably closed.}  Then the
reductive quotient $\Gred = G/R$ is \emph{split} and in particular has
a Borel subgroup over $k$.

Fix a Borel subgroup $B \subset \Gred = G/R$, and write $\widetilde B
= \pi^{-1}(B) \subset G$ where $\pi:G \to \Gred$ is the quotient
mapping. We may view $\widetilde B$ as an extension
\begin{equation}
\label{B-extension}
  1 \to R \to \widetilde{B} \to B \to 1.
\end{equation}
We assume that \eqref{B-extension} is split, and we fix a homomorphism
$\sigma:B \to \widetilde{B}$ which is a section to this extension. Write
$B_1$ for the image of $\sigma$. Under these assumptions, we find:
\begin{stmt}
\label{initial-criteria}
 $G$ has a Levi factor containing $B_1$.
\end{stmt}

\begin{proof}
  Recall that we have fixed a \emph{linearizable splitting sequence}
  $1 = R_n \subset R_{n-1} \subset \cdots \subset R_0 = R$ for $R$;
  for each $i$, the vector group $R_i/R_{i+1}$ is
  $\Gred$-equivariantly isomorphic to the linear representation $V_i$ of
  $\Gred$.

  We prove the result by induction on $n$. First suppose that $n=1$;
  then $R \simeq V_0 = V$ is a linearizable vector group. Viewing $G$
  as an extension of $\Gred$ by $V$, we find a 2-cocycle $\alpha =
  \alpha_G \in Z^2(G,V)$ as in \S
  \ref{cohomology-and-group-extensions}. The restriction to $B$ (more
  precisely: to $B \times B$) of $\alpha$ is the 2-cocycle determined
  by the extension \eqref{B-extension}; thus by hypothesis and by
  Proposition \ref{cohomology-and-group-extensions}, we have
  $[\alpha_{\mid B\times B}] = 0$ in $H^2(B,V)$. But then $[\alpha] =
  0$ in $H^2(G,V)$ by Proposition \ref{parabolic}, so another
  application of Proposition \ref{cohomology-and-group-extensions}
  shows that the extension
  \begin{equation*}
    0 \to V \to G \to \Gred \to 1
  \end{equation*}
  is split. Let $\tau:\Gred \to G$ be a homomorphism which is a
  section to this extension. Restricting $\tau$ to the Borel group $B$
  gives a section $\tau_{\mid B}:B \to \widetilde{B}$. According to
  \ref{param-sections}, there is $f \in Z^1(B,V)$ for which
  \begin{equation*}
    \sigma(b) = \tau(b) \cdot i(f(b)) \quad 
    \text{for each commutative $k$-algebra $\Lambda$ 
      and each $b \in B(\Lambda)$.}
  \end{equation*}
  According to \ref{Z1-surjective} we may find $h \in Z^1(G,V)$ such
  that $f = h_{\mid B}$.  Now the image of the section $\Gred \to G$
  given by $g \mapsto \tau(g) \cdot i(h(g))$ is a Levi factor of $G$
  containing $B_1$, as required.
  
  Now suppose $n>1$ and that the conclusion of the Theorem is known
  for groups having linearizable splitting sequences of length $<n$.
  Let $\overline{B_1}$ be the image of $B_1$ in $G/R_1$. Then
  $\overline{B_1}$ is a complement to $R/R_1$ in $\widetilde B/R_1$,
  so that the extension
  \begin{equation}
    \label{factor-B-extension}
    0 \to R/R_1 \to \widetilde B/R_1 \to B \to 1 
  \end{equation}
  is split; thus by the case $n=1$, we may find a Levi factor
  $\overline{M} \subset G/R_1$ containing $\overline{B_1}$.

  Write $\pi:G \to G/R_1$ for the quotient mapping, and consider
  \begin{equation*}
    \widetilde{B_1} = \pi^{-1}(\overline{B_1}) \subset M = \pi^{-1}(\overline{M}).
  \end{equation*}
  After identifying $\overline{M}$ with $\Gred$, we may view $M$ and $\widetilde{B_1}$ as
  extensions
  \begin{equation*}
    1 \to R_1 \to M \to \Gred \to 1 \quad \text{and} \quad
    1 \to R_1 \to \widetilde{B_1} \to B \to 1.
  \end{equation*}
  Moreover, $B_1 \subset \widetilde{B_1}$ so that $\sigma:B \to
  \widetilde{B_1}$ is a section to the second extension.  Since the
  unipotent radical $R_1$ of the linear algebraic group $M$ has a
  linearizable splitting sequence of length $n-1$, induction gives a
  Levi factor $N$ of $M$ containing $B_1$, and evidently $N$ is the
  desired Levi factor of $G$.
\end{proof}

\subsection{Existence of a unique Levi factor containing a fixed maximal torus}
\label{main-existence-result}
We prove in this section the existence result for Levi factors that is
applied below in \S \ref{parahoric-section}.

Fix a maximal torus $T$ of $G$, and let $\Tred$ denote the image of
$T$ under $\pi$; then $\Tred$ is a maximal torus of $\Gred$.

\begin{stmt}
  \label{stmt:sep-closed-enough}
  Suppose that there is a unique Levi factor $M$ of $G_{/\ksep}$
  containing $T_{/\ksep}$. Then $M$ is defined over $k$ and is the
  unique Levi factor of $G$ containing $T$.
\end{stmt}

\begin{proof}
  Write $\Gamma$ for the Galois group of $\ksep/k$. Since $T$ is
  defined over $k$, it is stable by the action of 
  $\Gamma$. The unicity of $M$ then shows that $M$ is $\Gamma$ stable;
  thus $M$ is defined over $k$ by Galois descent
  \cite{springer-LAG}*{11.2.8}.
\end{proof}

\begin{theorem}
  Let $k$ be separably closed, fix a Borel subgroup $B \subset \Gred$
  containing $\Tred$, and write $\widetilde B = \pi^{-1}(B)$ where
  $\pi:G \to \Gred = G/R$ is the quotient mapping.  Assume that there
  is a unique closed subgroup $B_1 \subset \tilde B$ such that $T
  \subset B_1$ and such that $\pi_{\mid B_1}:B_1 \xrightarrow{\sim} B$
  is an isomorphism. Then $G$ has a unique Levi factor containing $T$.
\end{theorem}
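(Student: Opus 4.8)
The plan is to read off the existence of a Levi factor containing $T$ from \ref{initial-criteria}, and then to prove uniqueness by pinning down an arbitrary such Levi factor $M$ in terms of $B_1$ together with the complement attached to the \emph{opposite} Borel subgroup of $\Gred$. For existence I note that the hypothesis in particular asserts that a subgroup $B_1$ with the stated properties exists; then $\sigma := (\pi_{\mid B_1})^{-1}\colon B \to B_1 \subseteq \widetilde B$ is a homomorphic section of the extension \eqref{B-extension} whose image is $B_1$, so \ref{initial-criteria} applies and produces a Levi factor $M_0$ of $G$ with $T \subseteq B_1 \subseteq M_0$.

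The key preliminary observation is that $\pi$ carries $N_G(T)$ onto $N_{\Gred}(\Tred)$. Indeed, the isomorphism $\pi_{\mid M_0}\colon M_0 \xrightarrow{\sim}\Gred$ coming from the Levi factor $M_0$ carries $T$ onto $\Tred$, hence restricts to an isomorphism $N_{M_0}(T)\xrightarrow{\sim} N_{\Gred}(\Tred)$; since $N_{M_0}(T)\subseteq N_G(T)$ while $\pi(N_G(T))\subseteq N_{\Gred}(\Tred)$ automatically, equality follows. Now let $B^-$ be the Borel subgroup of $\Gred$ opposite to $B$ with respect to $\Tred$, and set $\widetilde{B^-} := \pi^{-1}(B^-)$. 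Since the Borel subgroups of $\Gred$ containing $\Tred$ form a single $N_{\Gred}(\Tred)$-orbit, I may choose $\bar n\in N_{\Gred}(\Tred)$ with $\bar n B\bar n^{-1}=B^-$ and, by the observation just made, lift it to $n\in N_G(T)$. Then $n\widetilde B n^{-1}=\widetilde{B^-}$ and $nTn^{-1}=T$, so conjugation by $n$ gives a bijection between the collection of closed subgroups $C\subseteq\widetilde B$ with $T\subseteq C$ and $\pi_{\mid C}\colon C\xrightarrow{\sim}B$ and the analogous collection inside $\widetilde{B^-}$. In particular the hypothesis transfers: there is a unique closed subgroup $B_1^-\subseteq\widetilde{B^-}$ with $T\subseteq B_1^-$ and $\pi_{\mid B_1^-}\colon B_1^-\xrightarrow{\sim}B^-$, namely $B_1^-=nB_1n^{-1}$.

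To finish, let $M$ be any Levi factor of $G$ containing $T$, so $\pi_{\mid M}\colon M\xrightarrow{\sim}\Gred$. Put $B_M:=(\pi_{\mid M})^{-1}(B)$ and $B_M^-:=(\pi_{\mid M})^{-1}(B^-)$. Each of these is a closed subgroup of $M$ containing $T$ (because $\pi(T)=\Tred$ lies in both $B$ and $B^-$, and $T\subseteq M$), is carried by $\pi$ isomorphically onto $B$, respectively $B^-$, and therefore lies inside $\widetilde B$, respectively $\widetilde{B^-}$; so the hypothesis and its transferred form force $B_M=B_1$ and $B_M^-=B_1^-$. Since $B$ and $B^-$ between them contain $\Tred$ and all the $\Tred$-root subgroups of $\Gred$, they generate $\Gred$, and applying the isomorphism $(\pi_{\mid M})^{-1}$ yields
\begin{equation*}
  M=(\pi_{\mid M})^{-1}(\Gred)=\langle B_M,B_M^-\rangle=\langle B_1,B_1^-\rangle,
\end{equation*}
a subgroup that does not depend on $M$. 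Together with the Levi factor $M_0$ produced above, this shows that $G$ has exactly one Levi factor containing $T$.

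The only step that is not formal manipulation with the isomorphism $\pi_{\mid M}$ is the transfer of the hypothesis from $B$ to $B^-$: the hypothesis is assumed only for the one Borel $B$, and the point I expect to be the crux is that the existence of a \emph{single} Levi factor already makes $\pi\colon N_G(T)\to N_{\Gred}(\Tred)$ surjective, which is precisely what lets one conjugate the uniqueness statement over to $B^-$ (and, if desired, to every Borel subgroup of $\Gred$ containing $\Tred$). Once that is in hand, the remainder is routine.
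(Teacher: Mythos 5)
Your proof is correct and follows essentially the same route as the paper: existence via \ref{initial-criteria}, and uniqueness by transferring the unique-lifting hypothesis to the opposite Borel subgroup $B^-$ and observing that any Levi factor containing $T$ must contain both $B_1$ and the unique lift $B_1^-$, hence is determined by them. The only cosmetic difference is in the transfer step — you conjugate by an element of $N_G(T)$ lifted through the Levi factor $M_0$, which visibly preserves the condition $T \subset C$, whereas the paper conjugates by an arbitrary preimage under $G(k) \to \Gred(k)$ of an element of $\Gred(k)$ carrying $B$ to $B^-$ — and in the conclusion, where you write $M = \langle B_1, B_1^-\rangle$ while the paper notes that $M$ contains the dense big cell $C B_1$.
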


\begin{proof}
  Since $k$ is assumed to be separably closed and since the extension
  $1 \to R \to \widetilde{B} \to B \to 1$ is split by hypothesis, we
  may apply \ref{initial-criteria}; it gives a Levi factor $M \subset
  G$ containing $B_1$ and in particular containing $T$. It only
  remains to argue the uniqueness.

  Before proceeding further, we observe first that if $B'$ is
  \emph{any} Borel subgroup of $\Gred$ (over $k$) containing $\Tred$,
  the hypothesis shows: $(*)$ there is a unique closed $B'_1 \subset
  \pi^{-1}(B')$ such that $\pi_{\mid B'_1}:B_1' \xrightarrow{\sim} B$
  is an isomorphism. Indeed, since $k$ is separably closed,
  \cite{borel-LAG}*{Theorem V.20.9} shows that any two Borel subgroups
  of $\Gred$ are conjugate by an element of $\Gred(k)$, and the claim
  follows since the mapping $G(k) \to \Gred(k)$ is surjective by
  Proposition \ref{complement-to-R}.

  Suppose now that $M,M'$ are two Levi factors of $G$ each containing $T$.
  The assumption shows that both $M$ and $M'$ contain $B_1$. Choose in
  the reductive group $M$, resp. $M'$, a Borel subgroup $C$, resp.
  $C'$, containing $T$ opposite to $B_1$

  Then $\pi(C)$ and $\pi(C')$ are Borel subgroups of $\Gred$
  containing $T$ and opposite to $B$. Hence $\pi(C)$ and $\pi(C')$ are
  \emph{equal} by \cite{borel-LAG}*{Prop. IV.14.21(i)}. But then $C =
  C'$ by the observation $(*)$.  Then $M$ and $M'$ each contain the
  variety $CB_1 = C'B_1$ as an open, dense subset
  \cite{borel-LAG}*{Prop.  1V.14.21(iii)}, hence $M = M'$ as required.
\end{proof}

\section{Parahoric group schemes}
\label{parahoric-section}
Let $\A$ be a Henselian discrete valuation ring (DVR) with maximal
ideal $\mm = \varpi \A$. Henselian means that $\A$ satisfies the
conclusion of Hensel's Lemma; for example, the DVR $\A$ is Henselian
if it is complete in its $\mm$-adic topology.  We write $K$ for the
field of fractions of $\A$ and $k$ for the residue field of $\A$;
\emph{we assume throughout \S\ref{parahoric-section} that the residue
  field $k$ is perfect}.

We denote by $G$ a connected and reductive group over the field $K$.
We will be interested in certain algebraic groups over $k$ which arise
in the study of this group $G$. These are the special fibers
$\parah_{/k}$ of certain smooth $\A$-group schemes $\parah$
associated with $G$ known as \emph{parahoric group schemes}.

As hinted at in the introduction, these parahoric group schemes are
related to the stabilizers in $G(K)$ of certain subsets of the
\emph{affine building} $\Build$ of $G$. The reader is referred to
\cite{corvallis}, \cite{bruhat-tits-I} and \cite{bruhat-tits-II} for
more on these matters.

For our purposes in this paper, we do not need to discuss in detail
this affine building. When $G$ is split over $K$, we will just 
describe the parahoric group schemes associated to the ``standard
alcove'' in a fixed apartment; up to isomorphism of $\A$-group
schemes, these account for all parahoric group schemes associated with
$G$.  When $G$ splits over an unramified extension $L/K$, every
parahoric group scheme for $G$ is obtained by ``\'etale descent'' from
one for the split group $G_{/L}$.

These descriptions will permit us to give a proof of Theorem A from \S
\ref{introduction}.

\subsection{An apartment in case $G$ is split over $K$}
\label{apartment-in-split-case}

Suppose in \S\ref{apartment-in-split-case} that $G$ is split over $K$
and that $T$ is a maximal split $K$-torus of $G$.

In order to describe the parahoric group schemes associated with $G$,
we must first describe a part of the Bruhat-Tits building $\Build$
associated with $G$, namely, the \emph{apartment} $A$ corresponding
to $T$.

Write $\Phi \subset X^*(T)$ for the roots of $G$, and for $a \in \Phi$
write $U_a$ for the corresponding root subgroup of $G$.  Fix a
\emph{Chevalley system}, i.e. a choice of $K$-isomorphisms $\chi_a:\Ga
\to U_a$ for each $a \in \Phi$ satisfying the ``usual'' commutation
relations; see \cite{bruhat-tits-II}*{(3.2.2)}.

Now write $A = X_*(T) \tensor \R$, where $X_*(T)$ is the lattice of
\emph{co-characters} of the split torus $T$. An \emph{affine
  function} on $A$ has the form $\phi = (\psi,r)$ where $\psi \in
A^\vee = X^*(T) \tensor \R$ and $r\in \R$; thus $\phi(a) = \psi(a) +
r$ for $a \in A$, and $\psi$ is the \emph{gradient} of $\phi$. The
affine roots $\Phi^{\aff}$ of $G$ with respect to $T$ are the affine
functions on $A$ of the form $\alpha = (a,\gamma)$ for $a \in \Phi$
and $\gamma \in \Z$.

The \emph{walls} of $A$ are the subsets $\alpha^{-1}(0)$ for $\alpha
\in \Phi^{\aff}$; the connected components of the complement of the
union of all walls are the \emph{alcoves} of $A$, and the facets
of these alcoves are the facets of $A$.

Write $\Phi = \bigcup_{i \in I} \Phi_i$ as the disjoint union of its
irreducible components $\Phi_i$, fix for each $i$ a \emph{basis of
  simple roots} $\Delta_i$ for the root system $\Phi_i$, and let
$\tilde \alpha_i \in \Phi_i$ be the highest root (i.e. the dominant
long root for the given choice of basis).  Now write $\Delta_i^0$ for
the set of affine roots
\begin{equation*}
  \Delta_i^0 = \{(a,0) \mid a \in \Delta_i\} \cup \{(-\tilde \alpha_i,1)\}
\end{equation*}
and write $\Delta^0 = \bigcup_{i \in I} \Delta_i^0$.

The \emph{fundamental alcove} is the set $\displaystyle C =
\bigcap_{\alpha \in \Delta^0} \alpha^{-1}((0,\infty))$; when $G$ is
(quasi-) simple, $C$ is a simplex; for semisimple $G$, $C$ is a direct
product of simplices, and in general $C$ is a direct product of
simplices and a Euclidean space.

\begin{stmt}
  \label{facet-description}
  The facets in the closure of the fundamental alcove $C$ are in
  bijection with the subsets $\Theta = \cup_{i \in I} \Theta_i \subset
  \Delta^0$ where $\Theta_i$ is a non-empty subset of $\Delta_i^0$ for
  each $i \in I$.  The facet $\Facet_\Theta$ determined by the subset
  $\Theta$ is given by
  \begin{equation*}
    \Facet_\Theta = \left ( \bigcap_{\alpha \in \Theta} 
      \alpha^{-1}((0,\infty)) \right)\cap 
    \left ( \bigcap_{\alpha \in \Delta^0 \setminus  \Theta} 
      \alpha^{-1}(0) \right).
  \end{equation*}
\end{stmt}

\begin{proof}
  The assertion is easily reduced to the case where $\Phi$ is
  irreducible, and in that case it follows from
  \cite{bourbaki}*{Corollary VI.2.3}.
\end{proof}

\begin{stmt}
  \label{closed-system-of-roots}
  Let $\Phi_\Theta$ be the set of all roots $a \in \Phi$ such that
  there is $\alpha \in \Phi^{\aff}$ with gradient $a$ for which
  $\alpha_{\mid \Facet} = 0$.  Then $\Phi_\Theta$ is a closed system
  of roots in $\Phi$. The Dynkin diagram of $\Phi_\Theta$ is obtained
  from the extended Dynkin diagram of $\Phi$ by deleting the nodes
  corresponding to $\Theta$ and all edges adjacent to those nodes.
\end{stmt}

\begin{proof}
  If $a,b \in \Phi_\Theta$ then there are integers $\gamma_a,\gamma_b$
  such that the affine roots $(a,\gamma_a)$ and $(b,\gamma_b)$ vanish
  on $\Facet$. Now, if $a+b \in \Phi$ then evidently
  $(a+b,\gamma_a+\gamma_b)$ vanishes on $\Facet$ so that $a + b \in
  \Phi_\Theta$, as required. For the second assertion, see (e.g.)
  \cite{Prasad-Raghunathan}*{2.22}.
\end{proof}

\begin{rem}
  \begin{enumerate}[(a)]
  \item   Strictly speaking, the apartment $A$ of the affine building $\Build$
    of $G$ associated with the torus $T$, should be instead an
    \emph{affine space} under $X_*(T) \tensor \R$; our choice of
    Chevalley system determines an ``origin'' for $A$ and permits our
    identification of $A$ with $X_*(T) \tensor \R$; see \S1 of
    \cite{corvallis} for a nice account of these matters.
  \item If $W = N_G(T)/T$ denotes the Weyl group of $G$ and $W^{\aff}
    = W \ltimes X_*(T)$ the affine Weyl group, then any alcove in $A$
    is conjugate by $W^{\aff}$ to the fundamental alcove $C$; for this
    reason, we focus attention on the alcove $C$.
  \end{enumerate}
\end{rem}

\subsection{Parahoric group schemes in case $G$ is split over $K$}
\label{split-parahoric-description}

Keep the assumptions and notations of \ref{apartment-in-split-case},
and fix a facet $\Facet$ of the standard alcove $C$. Thus $\Facet =
\Facet_\Theta$ for some $\Theta$ as in \ref{facet-description}.  We
are going to describe the parahoric group scheme associated with the
facet $\Facet$.

This parahoric group scheme is constructed from $\A$-group schemes
attached to $T$ and the $U_a$.  Since $T$ is a split $K$-torus, there
is a canonical split $\A$-torus $\TT$ with generic fiber $\TT_{/K} =
T$ for which $X^*(T) = X^*(\TT)$; see \cite{bruhat-tits-II}*{1.2.11}.

Given $\alpha = (a,\gamma) \in \Phi^{\aff}$ with gradient $a \in
\Phi$, we get a \emph{subgroup} $U_\alpha \subset U_a(K)$ by the rule
$U_\alpha = \chi_a(\varpi^\gamma\A)$ where $\chi_a:\Ga \to U_a$ is the
isomorphism involved in the Chevalley system. We get moreover a smooth
$\A$-group scheme $\UU_\alpha$ with generic fiber $U_a$ obtained by
transport of structure via $\chi_a$ from the $\A$-group scheme defined
by the $\A$-module $\varpi^\gamma\A$ as in
\cite{bruhat-tits-II}*{(1.4.1)}; then $\UU_\alpha(\A) = U_\alpha$.

For $a \in \Phi^+$, let $e_a = 0$ if the root $a$
vanishes on $\Facet \subset A$, and let $e_a = 1$ otherwise; note
in the latter case that $1 > a(x) > 0$ for each $x \in \Facet$.
Consider the following set of affine roots:

\begin{equation*}
  \Psi=\Psi_\Theta = \{(a,0) \mid a \in \Phi^+\} \cup \{(-a,e_a) \mid a \in  \Phi^+\}
\end{equation*}

\begin{stmt}
  \label{parahoric-construction}
  $(\TT,(\UU_\alpha)_{\alpha \in \Psi_\Theta})$ is a \emph{schematic
    root datum} over $\A$. There is a smooth $\A$-group scheme
  $\parah$ with generic fiber $G$ such that the injections of $T$ and
  the $U_a$ in $G$ prolong to isomorphisms of $\TT$ and the $\UU_a$
  onto closed $\A$-subschemes of $\parah$. With the obvious
  definitions of the sets of affine roots $\Psi^{\pm}$, the product
  mappings yield isomorphisms of $\prod_{\alpha \in \Psi^\pm}
  \UU_{\alpha}$ onto closed subgroup schemes $\UU^\pm$ of $\parah$,
  and the product mapping yields an isomorphism of $\UU^- \times \TT
  \times \UU^+$ onto an open subscheme of $\parah$.
\end{stmt}

\begin{proof}
  This follows from \cite{bruhat-tits-II}*{3.8.1, 3.8.3, \S4.6 and
    4.6.26}; we only need to observe that the function $f_\Facet$ of
  \emph{loc. cit.}, 4.6.26 takes the value $0$ on any $a\in \Phi^+$,
  while for $a \in \Phi^+$, $f_\Facet(-a) = 0$ if $e_{a} = 0$ and
  $0 < f_\Facet(-a) < 1$ if $e_{a} = 1$.
\end{proof}

\begin{stmt}[\cite{Prasad-Raghunathan}*{2.22} or \cite{bruhat-tits-II}*{4.6.12}]
  \label{parahoric-reductive-quotient}
  The root system of the reductive quotient of the special fiber
  $\parah_{/k}$ is  $\Phi_\Theta$.
\end{stmt}

\begin{stmt}
  \label{has-unique-borel-lift}
  Fix a Borel subgroup $B$ containing $\TT_{/k}$ in the reductive
  quotient $\parah_{/k,\red}$ of the special fiber of $\parah$, and
  let $\tilde B = \pi^{-1}(B) \subset \parah_{/k}$ where $\pi$ is the
  quotient mapping.  There is a unique subgroup $B_1 \subset \tilde B$
  such that $\TT_{/k} \subset B_1$ and such that $\pi_{\mid B_1}:B_1
  \to B$ is an isomorphism.
\end{stmt}

\begin{proof}
  The existence of $B_1$ follows from \ref{closed-system-of-roots}
  together with \cite{bruhat-tits-II}*{Cor. 4.6.4(ii)}, since a
  positive system of roots in $\Phi_\Theta$ is quasi-closed in the
  terminology of \emph{loc. cit.}.  Uniqueness follows from \emph{op.
    cit.}  Cor. 4.6.4(i) since any $B_1$ must contain the root
  subgroup of $\parah_{/k}$ for each $a \in \Phi_\Theta^+$.
\end{proof}

\subsection{Condition \textbf{(L)} for the special fiber of a parahoric group 
  scheme in the  split case}
\label{split-parahoric-condition-(L)}

Keep the notation and assumptions of
\S\ref{split-parahoric-description}, and suppose in addition that the
residue field $k$ of $\A$ is \emph{algebraically closed}.

In this case, we verify that condition \textbf{(L)} holds for the
special fiber $\parah_{/k}$ of the parahoric group scheme
$\parah_\Facet$ determined by the facet $\Facet = \Facet_\Theta$.

Recall that $\Phi = \cup_{i \in I} \Phi_i$.
\begin{stmt}[see e.g. \cite{Prasad-Raghunathan}*{2.10}]
  \label{affine-expression}
  For an affine root $\alpha \in \Phi^{\aff}$, let $i \in I$ such that
  the gradient of $\alpha$ is in $\Phi_i$. Then we may write
  $\displaystyle \alpha = \sum_{\beta \in \Delta_i^0} t_\beta \beta$
  for unique integers $t_\beta$, and the $t_\beta$ are either all
  non-negative or all non-positive.
\end{stmt}

We define a function $\ell_\Theta$ on affine roots as follows: if
$\alpha \in \Phi^{\aff}$ has gradient $a \in \Phi_i \subset \Phi$, we
set
\begin{equation*}
  \ell_\Theta(\alpha) = \sum_{\alpha \in \Theta_i} t_\alpha
\end{equation*}
where the $t_\alpha$ are as in   \ref{affine-expression}.

Write $\delta = (0,1)$ for the constant affine function on $A$.
Then for $i \in I$, we may write
  $\delta =  \sum_{\alpha \in \Delta^0_i} m(i)_\alpha \alpha$
for unique positive integers $m(i)_\alpha$,
and we set 
\begin{equation}
  \label{ell-equation}
  \ell_i = \sum_{\alpha \in \Delta^0_i} m(i)_\alpha \ \text{for $i \in I$}.
\end{equation}

\begin{stmt}
  \label{parahoric-condtion-(L)}
  The special fiber $\parah_{/k}$ satisfies condition \textbf{(L)} of
  \S \ref{linearizable}.
\end{stmt}

\begin{proof}
  It follows from \cite{Prasad-Raghunathan}*{2.22} that the unipotent
  radical $R$ of $\parah_{/k}$ is generated by the canonical images in
  $\parah(k)$ of the groups $\UU_\alpha(\A)$ for all affine roots
  $\alpha$ with $\ell_\Theta(\alpha) \ge 1$.

  Moreover, it is straightforward to verify that $R = \prod_j R(j)$ is
  the direct product of the $G$-invariant subgroups $R(j)$ generated
  by the canonical images of those groups $\UU_\alpha(\A)$ as above
  for which $\alpha$ has gradient in $\Phi_j$.  Thus it suffices to
  prove the assertion in the case where $\Phi = \Phi_1$ is
  irreducible, which we now suppose. Write $\ell$ for $\ell_1$ as in
  \eqref{ell-equation}.

  Let $0 \le i \le \ell-1$. Since $k$ is algebraically closed, the
  canonical images in $\parah(k)$ of the groups $\UU_{\alpha}(\A)$
  for which $\ell_\Theta(\alpha) \ge i+1$ generate a closed, connected
  subgroup $R_i \subset \parah_{/k}$.

  It follows from \cite{Prasad-Raghunathan}*{2.16} that $R_{\ell-1} =
  1$ and \emph{loc. cit.}, 2.22 shows for $0 \le i \le \ell -2$ that
  the quotient $R_i/R_{i+1}$ has a natural structure of a finite
  dimensional $k$-vector space.  The usual commutator relations
  \cite{springer-LAG}*{8.2.3} for $G$ imply that the reductive
  quotient of $\parah_{/k}$ acts linearly on each quotient, thus
  indeed \textbf{(L)} holds for $\parah_{/k}$.
\end{proof}

\subsection{The maximal unramified extension}
Fix a separable closure $\Ksep$ of $K$, and let $\Kun$ denote the
maximal unramified extension of $K$ in $\Ksep$.  The integral closure
$\Aun$ of $\A$ in $\Kun$ is a DVR whose residue field is an algebraic
closure $\kalg$ of the perfect field $k$.

We consider the reductive $\Kun$-group $\Gun$ obtained from $G$ by
base-change.  By a theorem of Lang \cite{SerreGC}*{II.3.3}, $\Kun$ is
a $C_1$ field. Thus, we have the following important result due to
Steinberg (in case $K$ is perfect) and Borel-Springer:
\begin{stmt}[\cite{borel-springer}]
  \label{Gun-quasisplit}
  The reductive group $\Gun$ is \emph{quasi-split}; i.e.
  $\Gun$ has a Borel subgroup defined over $\Kun$.
\end{stmt}

When $\Gun$ is split over $\Kun$, we have the following result:
\begin{stmt}[\cite{bruhat-tits-II}*{5.1.12}]
  \label{splitting-torus}
  There is a maximal $K$-split torus $S$ of $G$ and a maximal torus
  $T$ of $G$ containing $S$ for which $T_{/\Kun}$ is $\Kun$-split.
\end{stmt}

\subsection{Parahoric group schemes for $G$ and \'etale descent}
Let $G$ be a connected, reductive group over $K$. 

If $G$ is \emph{quasi-split} - i.e. has a Borel subgroup over $K$ -
the parahoric group schemes associated with $G$ can be described in an
explicit manner analogous to our description in
\S\ref{split-parahoric-description} when $G$ is split; see
\cite{bruhat-tits-II}*{\S4}.

In general, the parahoric group schemes for $G$ arise by \emph{\'etale
  descent} from those of $\Gun$. We will describe this descent when
$\Gun$ is split over $\Kun$.  Fix $S \subset T \subset G$ as in
\ref{splitting-torus}. Then $T_{\Kun}$ is a $\Kun$-split torus of
$\Gun$; write $\Aptun$ for the apartment associated with $\Tun$ as in
\S\ref{apartment-in-split-case}. The Galois group $\Sigma =
\Gal(\Kun/K)$ acts on $\Aptun$, and we have the following:
\begin{stmt}[\'Etale descent]
  \label{etale-descent-result}
  Let $\Facet$ be a $\Sigma$-invariant facet of $\Aptun$, and let
  $\parahun = \parah_{\un,\Facet}$ be the corresponding parahoric
  group scheme for $\Gun$. There is a smooth $\A$-group scheme
  $\parah$ whose generic fiber $\parah_{/K}$ may be identified with
  $G$ for which $\parahun = \parah \tensor_\A \Aun$.
\end{stmt}

\begin{proof}
  It follows from \cite{bruhat-tits-II}*{4.6.30} that the action of
  $\Sigma$ on $\Kun[\Gun] = K[G] \tensor_K \Kun$ leaves invariant the
  subalgebra $\Aun[\parahun]$, the coordinate algebra of $\parahun$.
  Thus \cite{bruhat-tits-II}*{5.1.8} shows that $\parahun$ arises by
  base-change $\A \to \Aun$ from a smooth $\A$-group scheme $\parah$.
\end{proof}

The parahoric group schemes associated with $G$ are precisely the
$\A$-group schemes obtained via \ref{etale-descent-result}; see
\cite{bruhat-tits-II}*{\S5.2}.

Let $\parahun = \parah \tensor_\A \Aun$ be a parahoric group scheme
corresponding to a $\Sigma$-invariant facet in $\Aptun$.  The
special fiber ${\parahun}_{/\kalg}$ of $\parahun$ is a linear
algebraic group over $\kalg$, and the special fiber $\parah_{/k}$ of
$\parah$ is a linear algebraic group over $k$. We now have:

\begin{stmt}
  \label{special-fiber-base-change}
  The special fibers of $\parahun$ and $\parah$ are related by
  ${\parahun}_{/\kalg} = (\parah_{/k})_{/\kalg}.$
\end{stmt}

\subsection{Levi factors for the special fiber of $\parah$}
We prove in this section the Theorem stated in the introduction:

\begin{proof}[Proof of Theorem A]
  We first consider the case where $K = \Kun$ and $G$ is split over
  $K$.  Let $\parah$ be a parahoric group scheme associated with $G$.
  Up to isomorphism of $\A$-group schemes, $\parah$ arises via the
  construction in \S\ref{split-parahoric-description}. 

  Since $k = \kalg$, \ref{parahoric-condtion-(L)} shows that the
  special fiber $\parah_{/k}$ satisfies condition \textbf{(L)}.
  Moreover, by \ref{has-unique-borel-lift} $\parah_{/k}$ satisfies the
  hypotheses of Theorem \ref{main-existence-result}. It follows that
  $\parah_{/k}$ has a unique Levi factor containing a fixed maximal
  torus $T$. In particular, since $k=\kalg$ it follows that any two
  Levi factors of $\parah_{/k}$ are conjugate by an element of
  $\parah_{/k}(k)$.

  Returning to the general case, the parahoric group scheme $\parah$
  associated with $G$ arises by \'etale descent from a parahoric
  $\Aun$-group scheme $\parahun$ associated with $\Gun$
  \ref{etale-descent-result}; thus $\parahun = \parah \tensor_\A \Aun$
  and $\parah_{\un/\kalg} = \parah_{/k} \tensor_k \kalg =
  \parah_{/\kalg}.$

  Thus for any maximal $k$-torus $T$ of $\parah_{/k}$, the result
  above for $\parah_{\un/\kalg}$ together with
  \ref{stmt:sep-closed-enough} shows that $\parah_{/k}$ has a unique
  Levi factor (over $k$) containing $T$.

  If $G$ is split over $K$, then $\parah_{/k}$ has a maximal torus
  which is $k$-split, and any Levi factor of $\parah_{/k}$ contains a
  maximal split torus.  The existence assertions in parts (i) and (ii)
  of the Theorem now follow at once.  A result of Borel-Tits shows
  that any two maximal split tori of $\parah_{/k}$ are conjugate by an
  element of $\parah(k)$ \cite{conrad-gabber-prasad}*{Theorem C.2.3}
  or \cite{springer-LAG}*{15.2.6}; the conjugacy assertion in (i) now
  follows.  The geometric conjugacy assertion in (ii) follows from the
  observation that all maximal tori of $\parah_{/k}$ are geometrically
  conjugate.
\end{proof}

\section{Examples}

\subsection{Cohomological construction of a linear algebraic group with no Levi decomposition}
\label{sub:levi-failure-cohomological}

For a reductive group $G$, the non-vanishing of some $H^2(G,-)$ may be
used to construct linear algebraic groups having no Levi factor. We
give here an example.

Let $k$ be algebraically closed field of characteristic $p \ge 3$, and
let $G = \SL_{3/k}$. Write $T$ for the diagonal maximal torus in $G$,
and $B$ for the Borel subgroup of lower triangular matrices.

Let $\alpha_1,\alpha_2 \in X^*(T)$ be the simple roots of $G$ for the
choice of $B$ (the weights of $T$ on $\Lie(B)$ are non-positive), and
let $\varpi_1,\varpi_2 \in X^*(T)$ be the fundamental dominant weights
corresponding to the $\alpha_i$.

\begin{stmt}
  \label{low-alcove-simple}
  If $\tau$ is a dominant weight for which $\langle
  \tau,\alpha_1^\vee + \alpha_2^\vee \rangle \le p-2$, then $\rad
  V(\tau) = 0$ so that $L(\tau) = V(\tau) = H^0(\tau)$.
\end{stmt}
\begin{proof}
  The condition means that $\tau$ is in the closure of the lowest
  alcove for the dot-action of the affine Weyl group $W_p$ on
  $X^*(T)$; see \cite{JRAG}*{II.6.2(6)}. Since that alcove closure is
  a fundamental domain for the indicated action, one knows for any
  dominant weight $\gamma \le \tau$ that $\gamma \not \in W_p \bullet
  \tau$; thus by \ref{ext-by-Weyl-radical} and the linkage principle
  \cite{JRAG}*{II.6.17}, $\Hom_G(\rad V(\tau),L(\gamma)) \simeq
  \Ext^1_G(L(\tau),L(\gamma)) = 0$. Since $\tau$ is the highest weight
  of $V(\tau)$, it follows that $\rad V(\tau) = 0$ as required.
\end{proof}

According to \cite{JRAG}*{II.8.19}, the $G$-module $V(\tau)$ has a
filtration 
\begin{equation*}
  V(\tau) = V^0 \supset V^1 \supset V^2 \supset \cdots 
\end{equation*}
for which
$V^1 = \rad V(\tau)$ and $J(\tau) = \sum_{i \ge 1} \ch V^i$ is given by the Jantzen
sum formula \cite{JRAG}*{II.8.19}.
The following result follows from \ref{character-bases}:
\begin{stmt}
  \label{simple-Jantzen-sum} 
  Let $\tau$ be a dominant weight and suppose that $J(\tau) = \ch L$
  for a simple $G$-module $L$. Then $\rad V(\tau) \simeq L$.
\end{stmt}

We are going to apply the Jantzen sum formula to achieve some
cohomology computations.  Let
\begin{equation*}
  \lambda = p\varpi_1, \quad \mu = (p-2)\varpi_1 + \varpi_2, \quad \gamma = (p-3)\varpi_1.
\end{equation*}
\begin{stmt}
  \label{V(mu)}
  $\rad V(\mu) = L(\gamma)$, and
  $\ch L(\mu) = \chi(\mu) - \chi(\gamma).$
\end{stmt}
\begin{proof}
  Evaluating the Jantzen sum formula, we find that $J(\mu) =
  \chi(s_{\alpha_1+\alpha_2,p}\bullet \mu) = \chi(\gamma).$
  Since $\langle \gamma, \alpha_1^\vee + \alpha_2^\vee \rangle = p-3$,
  \ref{low-alcove-simple} shows that $V(\gamma) = L(\gamma)$ so that
  $\ch L(\gamma) = \chi(\gamma)$. The assertion now follows from
  \ref{simple-Jantzen-sum}.
\end{proof}

\begin{stmt}
  \label{V(lambda)}
  $\rad V(\lambda) =
  L(\mu)$, and $\ch L(\lambda) =
  \chi(\lambda) -\chi(\mu) + \chi(\gamma)$.
\end{stmt}
\begin{proof}
  Evaluating the Jantzen sum formula and using \ref{V(mu)}, we find that
  \begin{equation*}
    J(\mu) = \chi(s_{\alpha_1,p}\bullet \lambda) + \chi(s_{\alpha_1+\alpha_2,p}
    \bullet \lambda) = \chi(\mu) - \chi(\gamma)  = \ch L(\mu),
  \end{equation*}
  so the result follows from \ref{simple-Jantzen-sum}.
\end{proof}

\begin{stmt}
  \label{non-vanishing-ext2-example}
  $\dim \Ext^2_G(L(\lambda),L(\gamma)) = 1$.
\end{stmt}

\begin{proof}
  Since $L(\gamma) = H^0(\gamma)$, it follows from
  \ref{standard-Weyl-ext} that $\Ext^i_G(V(\lambda),L(\gamma)) = 0$
  for $i \ge 1$.  Using \ref{V(mu)} and \ref{V(lambda)} together with
  \ref{ext-by-Weyl-radical}, we find that $\dim
  \Ext^1_G(L(\mu),L(\gamma)) = 1$ and $\Ext^1_G(V(\lambda),L(\gamma))
  = 0$. Now apply $\Hom_G(-,L(\gamma))$ to the short exact sequence $0
  \to L(\mu) \to V(\lambda) \to L(\lambda) \to 0$ to get an exact
  sequence
  \begin{equation*}
    0 = \Ext^1_G(V(\lambda),L(\gamma)) \to \Ext^1_G(L(\mu),L(\gamma) \xrightarrow{\partial}
    \Ext^2_G(L(\lambda),L(\gamma) \to \Ext^2(V(\lambda),L(\gamma)) = 0;
  \end{equation*}
  then $\partial$ is an isomorphism and the claim follows.
\end{proof}

\begin{stmt}
  There is a linear algebraic group $E$ with reductive quotient $\SL_3$, with 
  abelian unipotent radical $W$ of dimension $\dfrac{3}{2}(p-1)(p-2)$, and
  with no Levi decomposition.
\end{stmt}

\begin{proof}
  Write $W = L(\lambda)^\vee \tensor L(\gamma)$; using Steinberg's
  tensor product theorem \cite{JRAG}*{II.3.17}, we observe that $W
  \simeq L(\varpi_2)^{[1]} \tensor L((p-3)\varpi_1)$; the Weyl degree
  formula together with \ref{low-alcove-simple} now shows that $W$ has
  the indicated dimension.  According to
  \ref{non-vanishing-ext2-example}, we may choose a non-zero
  cohomology class
  \begin{equation*}
    [\alpha] \in H^2(G,W) = H^2(G,L(\lambda)^\vee \tensor L(\gamma)) \simeq
    \Ext^2_G(L(\lambda),L(\gamma))
  \end{equation*}
  for $ \alpha \in Z^2(G,W)$; the extension $E=E_\alpha$ of $G$ by $W$
  then has the required properties.
\end{proof}

\subsection{Quasi-split unitary groups in even dimension}
\label{unitary-group-example}
As in \S \ref{parahoric-section}, let $\A$ be a Henselian DVR with
fractions $K$ and residues $k$.  \emph{We suppose in addition that the
  residue field $k$ is algebraically closed and has characteristic $p
  \ne 2$.} In particular, any finite extension of $K$ is totally
ramified over $K$.

We are going to give an example of a non-split reductive group $G$
over $K$ and a parahoric group scheme $\parah$ over $\A$ associated
with $G$ for which the special fiber $\parah_{/k}$ has a Levi
decomposition, but for which there are Levi factors not conjugate by
an element of $\parah(k)$.

Let $L/K$ be a quadratic (hence Galois) extension.  Denote the action
of the non-trivial element $\tau \in \Gamma = \Gal(L/K)$ by $(x
\mapsto x^\tau)$.  Write $\B$ for the integral closure of $\A$.
\begin{stmt}
  \label{dual-numbers}
  $\B \tensor_\A k \simeq k[\e]$ where $k[\e]$ is the $k$-algebra of
  \emph{dual numbers} with basis $1,\e$ for which $\e^2 = 0$.  The
  action of $\tau$ on $\B$ induces the automorphism $\e \mapsto -\e$
  on $k[\e] = \B \tensor_\A k$.
\end{stmt}
\begin{proof}
  Indeed, $\B$ is generated as $\A$-algebra by a uniformizer which
  satisfies an Eisenstein equation of degree 2 over $\A$.
\end{proof}

Let $V$ be a $2n$ dimensional $L$-vector space, and let $h:V \times V
\to L$ be a non-degenerate skew-hermitian form on $V$.  Let $G =
\SU(V,h)$ be the corresponding special unitary group; thus $G$ is a
$K$-form of $\SL_{2n}$.

The fact that $G$ is quasisplit \ref{Gun-quasisplit} means here
that $h$ has maximal index; thus writing $I = \{\pm 1,\pm
2,\dots,\pm n\}$, we may find a basis $\{e_i\mid i \in I\}$
of $V$ such that
 \begin{equation*}
   h(v,w) = \sum_{i = 1}^n (v_i)^\tau w_{-i} - (v_{-i})^\tau w_i
 \end{equation*}
 where we have written e.g. $v = \sum_{i \in I} v_i e_i$ for $v_i \in L$.

 Consider the lattice $\LL = \sum_{i \in I} \B e_i \subset V$ in $V$.
 Note that $h$ determines a skew hermitian form $\LL \times \LL \to \B$.
 Viewing $\LL$ as an $\A$-lattice in the $K$-vector space $V =
 R_{L/K}V$, we may consider the schematic closure $\parah$
 \cite{bruhat-tits-II}*{I.2.6} of $G$ in the $\A$-group scheme
 $\GL(\LL)$; put another way, $\parah$ is the $\A$-structure on $G$
 defined by the $\A$-lattice $\LL$.

\begin{stmt}
  $\parah$ is a parahoric group scheme for $G$.
\end{stmt}

\begin{proof}
  Argue as in \cite{corvallis}*{\S3.11}, where instead the special
  unitary groups for \emph{odd} dimensional $L$-vector spaces are
  treated in detail.
\end{proof}

Write $M = \LL \tensor_\A k$; using \ref{dual-numbers} we view $M$ as a free
$k[\e] = \B \tensor_\A k$-module of rank $2n$.  Write $\beta$ for the
form $\beta = h \tensor 1$; one sees that
\begin{equation*}
  \beta:M \times M \to k[\e]
\end{equation*}
is \emph{skew hermitian} for the involution of $k[\e]$ defined by $\e
\mapsto -\e$.  Write $X \mapsto \sigma(X)$ for the \emph{involution}
(i.e. anti-automorphism of order 2) of $\End_{k[\e]}(M)$ determined by
$\beta$; thus $\beta(Xv,w) = \beta(v,\sigma(X)w)$ for all $v,w \in M$.

The special fiber $\parah_{/k}$ identifies with the closed subgroup
$H=SU(\End_{k[\e]}(M),\sigma)$ of the linear $k$-group $\Aut_{k[\e]}(M)
= \End_{k[\e]}(M)^\times$ defined by 
\begin{equation*}
  H(\Lambda) = \SU(\End_{k[\e]}(M),\sigma)(\Lambda) = \{x \in \End_{k[\e]}(M) \tensor_k \Lambda
  \mid \sigma(x)x = 1, \det(x) = 1\}
\end{equation*}
for each commutative $k$-algebra $\Lambda$.

Since $\parah$ is smooth over $\A$, the group scheme $H = \parah_{/k}$
is smooth over $k$ -- i.e. $H$ is a linear algebraic group
over $k$; of course, the smoothness of $H$ can be checked directly
from its definition.

The bilinear form $\overline{\beta}$ induced on $\overline{M} = M/\e
M$ from $\beta$ by reduction mod $\e$ is non-degenerate and
symplectic.  The involution $\overline{\sigma}$ induced on
$\End_k(\overline{M})$ by $\sigma$ is evidently the adjoint involution
of $\overline{\beta}$.  Consider the natural map $\pi:\Aut_{k[\e]}(M)
\to \GL(\overline{M})$ of linear algebraic groups over $k$.

\begin{stmt}
  \label{image-symplectic}
  $\pi_{\mid H}$ defines a separable surjection $\pi_{\mid H}:H \to
  \SP(\overline{M}) = \SP(\overline{M},\overline{\beta}) \subset
  \GL(\overline{M})$ onto the \emph{symplectic group} of
  $\overline{\beta}$.
\end{stmt}

\begin{proof}
  It is clear that the image of $\pi$ lies in $\SP(\overline{M})$. The
  Lie algebra of $H$  is
  \begin{equation*}
    \Lie(H) = \hlie = \{X \in \End_{k[\e]}(M) \mid 
    \tr(X) = 0, \quad X + \sigma(X) = 0\}.
  \end{equation*}
  where $\tr(X) \in k[\e]$ is the \emph{trace.}  Given $\overline{Y}
  \in \mathfrak{sp}(\overline{M},\overline{\beta})$, let $Y =
  i(\overline{Y}) \in \End_{k[\e]}(M)$ where we have written
  $i:\End_k(\overline{M}) \to \End_{k[\e]}(M)$ for the mapping
  determined by the $k$-linear splitting of the surjection $k[\e] \to
  k$. Then $Y \in \hlie$ so that $d\pi_{\mid \hlie}$ is surjective.
  Since $\SP(\overline{M})$ is connected, $\pi_{\mid H}$ is
  indeed surjective.
\end{proof}

Write
\begin{equation*}
  \Sym(\End_k(\overline{M})) = \Sym(\End_k(\overline{M}),\overline{\sigma}) = 
  \{X \in \End_k(\overline{M}) \mid X = \overline{\sigma}(X)\}
\end{equation*}
for the space of \emph{symmetric} endomorphisms of $\overline{M}$,
and
\begin{equation*}
    \Sym^0(\End_k(\overline{M})) = \{X \in   \Sym(\End_k(\overline{M})) \mid \tr(X) = 0\}.
\end{equation*}
for those of trace 0.  Conjugation makes $\Sym^0(\End_k(\overline{M}))
\subset \Sym(\End_k(\overline{M}))$ into $\SP(\overline{M})$-modules.
\begin{stmt}
  \label{special-fiber-unipotent-radical}
  \begin{enumerate}[(a)]
  \item The kernel of $\pi_{\mid H}$ is
    the unipotent radical $R$ of $H$.
  \item There is an $\SP(\overline{M})$-equivariant isomorphism $\Sym^0(\End_k(\overline{M}))
    \xrightarrow{\sim} R$; in particular, $R$ is a vector group and is
    linearizable for the action of $\SP(\overline{M})$.
  \end{enumerate}
\end{stmt}

\begin{proof}
  Let $R = \ker \pi_{\mid H}$.  Then \ref{image-symplectic} shows that
  $R$ is smooth and that $H/R \simeq \SP(\overline{M})$ is reductive,
  (a) will follow once we know that $R$ is connected and unipotent;
  thus it is enough to prove (b).

  Since $R$ is smooth and $k$ is algebraically closed, it is enough to
  give an isomorphism on $k$-points. A $k$-point $y$ of $\ker \pi_{\mid
    H}$ has the form $y = 1 + \e X$ for $X \in \End_k(\overline{M})$.
  Then $y \in H(k)$  implies
  \begin{equation*}
    1 = y\sigma(y) = (1+\e X)\sigma(1+\e X)  
    = (1+\e X)(1 - \e \overline{\sigma}(X))
    = 1 + \e(X - \overline{\sigma}(X)).
  \end{equation*}
  so that $X = \overline{\sigma}(X)$. Since $1= \det(y) = 1 + \e\tr(X)$
  we deduce that $X \in \Sym^0(\End_k(\overline{M}))$. Thus $X \mapsto
  1 + \e X$ is the required isomorphism.
\end{proof}

\begin{stmt}
  \label{W-cohomology}
  Write $W = \Sym(\End_k(\overline{M}))$ and $W^0 =
  \Sym^0(\End_k(\overline{M}))$. Then
  \begin{enumerate}[(a)]
  \item $H^i(\SP(\overline{M}),W) = 0$ for $i \ge 1$.
  \item $H^i(\SP(\overline{M}),W^0) = 0$ for $i \ge 2$.
  \item $H^1(\SP(\overline{M}),W^0) = \left \{
      \begin{matrix}
        0 & \text{if $n \not \equiv 0 \pmod{p}$,} \\
        k & \text{if $n \equiv 0 \pmod p$.}
      \end{matrix}
      \right .$
    \end{enumerate}
\end{stmt}

\begin{proof}
  The $\SP(\overline{M})$-module $W$ identifies with $\bigwedge^2
  \overline{M}$, which  has a filtration by standard
  modules; cf. \cite{mcninch-dim-crit}*{Lemma 4.8.2} and recall the
  terminology from \S \ref{representation-theory}.  Thus (a) follows
  from \ref{ext-by-Weyl-radical}.

  Moreover, when $p$ does not divide $n$, the result just cited shows
  that $W$ is completely reducible, and that $W^0$ is a simple
  standard module (in fact, $W^0 = V(\varpi_2) = H^0(\varpi_2)$ where
  $\varpi_2$ is a certain fundamental dominant weight). Assertions (b) and (c)
  follow in this case from another application of
  \ref{ext-by-Weyl-radical}.

  Finally, suppose that $p$ divides $n$. Then the identity
  endomorphism of $\overline{M}$ is symmetric and has trace $2n = 0$
  in $k$, so that $W^0$ has a trivial submodule. The result of
  \emph{loc.  cit.} shows that $W^0$ is a Weyl module (again, $W^0 =
  V(\varpi_2)$) which has composition length 2, and there is a short
  exact sequence $0 \to W^0 \to W \to k \to 0$ of
  $\SP(\overline{M})$-modules. The resulting long exact sequence
  together with (a) now gives (b) and (c) in this case.
\end{proof}

\begin{prop}
  \begin{enumerate}[(a)]
  \item $H = \parah_{/k}$ has a Levi factor.
  \item If $n \not \equiv 0 \pmod p$, and if $L,L'$ are Levi factors
    of $H$, then $L = gL'g^{-1}$ for some $g \in H(k)$.
  \item If $n \equiv 0 \pmod p$, there are Levi factors $L,L'$ of $H =
    \parah_{/k}$ which are not conjugate by an element of $H(k)$.
  \end{enumerate}
\end{prop}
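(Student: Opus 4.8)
The plan is to deduce all three parts from the cohomology computations in \ref{W-cohomology}, via the general machinery of \S\ref{Levi-criteria}. By \ref{special-fiber-unipotent-radical}, the unipotent radical of $H = \parah_{/k}$ is $R = W^0 = \Sym^0(\End_k(\overline{M}))$, a vector group which is linearizable for the reductive quotient $\SP(\overline{M})$; thus $H$ has a linearizable splitting sequence of length one, with single layer $V_0 = W^0$ regarded as an $\SP(\overline{M})$-module. For (a): since $H^2(\SP(\overline{M}),W^0) = 0$ by \ref{W-cohomology}(b), Theorem \ref{easy-cohomology-existence} produces a Levi factor; equivalently, Proposition \ref{cohomology-and-group-extensions} shows that the extension $0 \to W^0 \to H \to \SP(\overline{M}) \to 1$ is split. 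For (b): when $n \not\equiv 0 \pmod p$ we have $H^1(\SP(\overline{M}),W^0) = 0$ by \ref{W-cohomology}(c), so Theorem \ref{levi-conjugacy} shows that any two Levi factors of $H$ are conjugate by an element of $R(k) \subseteq H(k)$.

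For (c) I would first split the extension as above, fix an identification $H = \SP(\overline{M}) \ltimes W^0$, and let $L_0$ be the resulting ``standard'' Levi factor, i.e.\ the image of the section $\sigma_0$. When $n \equiv 0 \pmod p$ we have $H^1(\SP(\overline{M}),W^0) = k \ne 0$ by \ref{W-cohomology}(c), so I may pick $f \in Z^1(\SP(\overline{M}),W^0)$ with $[f] \ne 0$; by \ref{param-sections} the rule $x \mapsto \sigma_0(x)\,i(f(x))$ defines a section $\sigma_f$ of $\pi$, and its image $L_f$ is a Levi factor of $H$ by \ref{split-condition}. The claim to establish is that $L_0$ and $L_f$ are not conjugate by any element of $H(k)$; since $k$ is algebraically closed, this simultaneously shows they are not geometrically conjugate, which is the point of the example.

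To prove the claim I would argue by contradiction: suppose $h L_0 h^{-1} = L_f$ with $h \in H(k)$, and write $h = i(v)\sigma_0(g)$ with $g \in \SP(\overline{M})(k)$ and $v \in W^0$. Because $\sigma_0(g)$ lies in the subgroup $L_0$, we have $\sigma_0(g) L_0 \sigma_0(g)^{-1} = L_0$, hence $h L_0 h^{-1} = i(v) L_0 i(v)^{-1}$. A direct computation in the semidirect product gives $i(v)\sigma_0(x)i(v)^{-1} = \sigma_0(x)\,i(xv - v)$, so $i(v) L_0 i(v)^{-1}$ is the image of the section of $\pi$ attached to the coboundary $\partial v$; since a section of $\pi$ is determined by its image, the equality $L_f = i(v)L_0i(v)^{-1}$ would force $f = \partial v$, contradicting $[f] \ne 0$. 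The only real obstacle is this step (c): parts (a) and (b) are immediate appeals to \S\ref{Levi-criteria} and \ref{W-cohomology}, whereas (c) requires upgrading the non-vanishing of $H^1(\SP(\overline{M}),W^0)$ — which on its face only obstructs $R(k)$-conjugacy — to genuine non-conjugacy under all of $H(k)$. The device that makes this work is the reduction from $h = i(v)\sigma_0(g)$ to $i(v)$, which uses crucially that one of the two Levi factors under comparison is the standard one $L_0$.
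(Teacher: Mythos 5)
Your proposal is correct and takes essentially the same route as the paper: parts (a) and (b) are the identical appeals to \ref{special-fiber-unipotent-radical} and \ref{W-cohomology} combined with Theorems \ref{easy-cohomology-existence} and \ref{levi-conjugacy}. For part (c) the paper simply cites \ref{conj-of-complements}, and your argument --- reducing conjugacy by a general $h = i(v)\sigma_0(g)$ to conjugacy by $i(v)$ and then to the vanishing of $[f]$ --- is precisely a correct unpacking of the proof of that cited statement rather than a different approach.
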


\begin{proof}
  In view of \ref{special-fiber-unipotent-radical} and
  \ref{W-cohomology}, assertion (a) follows from Theorem
  \ref{easy-cohomology-existence}, assertion (b) follows from Theorem
  \ref{levi-conjugacy}, and assertion (c) follows from
  \ref{conj-of-complements}.
\end{proof}

\newcommand\mylabel[1]{#1\hfil}

\begin{bibsection}
  \begin{biblist}[\renewcommand{\makelabel}{\mylabel} \resetbiblist{XXXXX}]   

  \bib{borel-LAG}{book}{
      author = {Borel, Armand},
      title = {Linear Algebraic Groups},
      year = {1991},
      publisher = {Springer Verlag},
      volume = {126},
      series = {Grad. Texts in Math.},
      edition = {2nd ed.},
      label = {Bo 91}
    }

    \bib{borel-springer}{article}{ 
      author={Borel, A.},
      author={Springer, T.~A.}, 
      title={Rationality properties of linear
        algebraic groups. II}, 
      date={1968}, 
      journal={T\^ohoku Math. J.
        (2)}, 
      volume={20}, 
      pages={443\ndash 497}, 
      label = {BS68}}

     \bib{borel-tits}{article}{
       author = {Borel, Armand},
       author = {Tits, Jacques},
       title = {Groupes R\'eductifs},
       journal = {Publ. Math. IHES},
       volume = {27},
       year = {1965},
       pages = {55--151},
       label = {BoTi 65}}

     \bib{bourbaki}{book}{
       author={Bourbaki, Nicolas},
       title={Lie groups and Lie algebras. Chapters 4--6},
       series={Elements of Mathematics (Berlin)},
       note={Translated from the 1968 French original by Andrew Pressley},
       publisher={Springer-Verlag},
       place={Berlin},
       date={2002},
       label = {Bou 02}}

     \bib{Brown}{book}{
       author={Brown, Kenneth S.},
       title={Cohomology of groups},
       series={Graduate Texts in Mathematics},
       volume={87},
       note={Corrected reprint of the 1982 original},
       publisher={Springer-Verlag},
       place={New York},
       date={1994},
       pages={x+306},
       label={Br 94}
     }

     \bib{bruhat-tits-I}{article}{
       author={Bruhat, F.},
       author={Tits, J.},
       title={Groupes r\'eductifs sur un corps local},
       language={French},
       journal={Inst. Hautes \'Etudes Sci. Publ. Math.},
       number={41},
       date={1972},
       pages={5--251},
       label = {BrTi 72}}
     
     \bib{bruhat-tits-II}{article}{
       author={Bruhat, F.},
       author={Tits, J.},
       title={Groupes r\'eductifs sur un corps local. II. Sch\'emas en groupes.
         Existence d'une donn\'ee radicielle valu\'ee},
       language={French},
       journal={Inst. Hautes \'Etudes Sci. Publ. Math.},
       number={60},
       date={1984},
       pages={197--376},
       label = {BrTi 84}}
     
     \bib{conrad-gabber-prasad}{book}{
        author = {Brian Conrad},
        author = {Ofer Gabber},
        author = {Gopal Prasad},
        title = {Pseudo-reductive groups},
        publisher = {Cambridge University Press},
        series = {New Mathematical Monographs},
        volume = {17},
        label = {CGP 10},
        year = {2010}
      }

      \bib{demazure-gabriel}{book}{
        author={Demazure, Michel},
        author={Gabriel, Pierre},
        title={Groupes alg\'ebriques. Tome I: G\'eom\'etrie alg\'ebrique,
          g\'en\'eralit\'es, groupes commutatifs},
        language={French},
        publisher={Masson \& Cie, \'Editeur, Paris},
        date={1970},
        pages={xxvi+700},
        label={DeGa 70}
      }

      \bib{humphreys}{article}{
        author={Humphreys, J. E.},
        title={Existence of Levi factors in certain algebraic groups},
        journal={Pacific J. Math.},
        volume={23},
        date={1967},
        pages={543--546},
        label={Hu 67}
      }

      \bib{Jacobson}{book}{
        author={Jacobson, Nathan},
        title={Lie algebras},
        note={Republication of the 1962 original},
        publisher={Dover Publications Inc.},
        place={New York},
        date={1979},
        pages={ix+331},
        label = {Jac 79}}
      
      \bib{JRAG}{book}{
        author={Jantzen, Jens Carsten},
        title={Representations of algebraic groups},
        series={Mathematical Surveys and Monographs},
        volume={107},
        edition={2},
        publisher={American Mathematical Society},
        place={Providence, RI},
        date={2003},
        pages={xiv+576},
        label={Ja 03}        
      }
		
      \bib{jantzen-semisimple}{article}{
        author={Jantzen, Jens Carsten},
        title={Low-dimensional representations of reductive groups are
          semisimple},
        conference={
          title={Algebraic groups and Lie groups},
        },
        book={
          series={Austral. Math. Soc. Lect. Ser.},
          volume={9},
          publisher={Cambridge Univ. Press},
          place={Cambridge},
        },
        date={1997},
        pages={255--266},
        label = {Ja 97}    }

        \bib{mcninch-witt}{article}{
          author={McNinch, George J.},
          title={Faithful representations of $\rm SL_2$ over truncated Witt
            vectors},
          journal={J. Algebra},
          volume={265},
          date={2003},
          number={2},
          pages={606--618},
          label = {Mc 03}
        }

        \bib{mcninch-dim-crit}{article}{
          author={McNinch, George J.},
          title={Dimensional criteria for semisimplicity of representations},
          journal={Proc. London Math. Soc. (3)},
          volume={76},
          date={1998},
          number={1},
          pages={95--149},
          label={Mc 98}
        }

        \bib{oesterle}{article}{
          author={Oesterl{\'e}, Joseph},
          title={Nombres de Tamagawa et groupes unipotents en caract\'eristique
            $p$},
          language={French},
          journal={Invent. Math.},
          volume={78},
          date={1984},
          number={1},
          pages={13--88},
          label = {Oe 84}
        }

         \bib{prasad-galois-fixed}{article}{
           author={Prasad, Gopal},
           title={Galois-fixed points in the Bruhat-Tits building of a reductive
             group},
           language={English, with English and French summaries},
           journal={Bull. Soc. Math. France},
           volume={129},
           date={2001},
           number={2},
           pages={169--174},
           label = {Pr 01}
         }

        \bib{Prasad-Raghunathan}{article}{
          author={Prasad, Gopal},
          author={Raghunathan, M. S.},
          title={Topological central extensions of semisimple groups over local
            fields},
          journal={Ann. of Math. (2)},
          volume={119},
          date={1984},
          number={1},
          pages={143--201},
          label = {PR 84}
        }

        \bib{rosenlicht-rationality}{article}{
          author={Rosenlicht, Maxwell},
          title={Questions of rationality for solvable algebraic groups over
            nonperfect fields},
          language={English, with Italian summary},
          journal={Ann. Mat. Pura Appl. (4)},
          volume={61},
          date={1963},
          pages={97--120},
          label = {Ro 63}
        }
        
         \bib{rousseau}{article}{
           author = {Rousseau, G.},
           title  ={Immeubles des groupes r\'eductifs sur les corps locaux},
           journal = {Th\`ese, Universit\'e de Paris-Sud},
           year = {1977},
           label = {Ro 77}
           }

        \bib{Serre-LocalFields}{book}{
          author={Serre, Jean-Pierre},
          title={Local fields},
          series={Graduate Texts in Mathematics},
          volume={67},
          note={Translated from the French by Marvin Jay Greenberg},
          publisher={Springer-Verlag},
          place={New York},
          date={1979},
          label={Se 79}}

        \bib{SerreGC}{book}{
          author={Serre, Jean-Pierre},
          title={Galois cohomology},
          note={Translated from the French by Patrick Ion and revised by the
            author},
          publisher={Springer-Verlag},
          place={Berlin},
          date={1997},
          pages={x+210},
          review={MR 98g:12007},
          label = {Ser97}}
        
        \bib{serre-AGCF}{book}{
          author={Serre, Jean-Pierre},
          title={Algebraic groups and class fields},
          series={Graduate Texts in Mathematics},
          volume={117},
          note={Translated from the French},
          publisher={Springer-Verlag},
          place={New York},
          date={1988},
          pages={x+207},
          label = {Se 88}
        }

        \bib{springer-LAG}{book}{ 
          author={Springer, Tonny~A.}, 
          title={Linear algebraic groups}, 
          edition={2}, 
          series={Progr. in Math.},
          publisher={Birkh{\"a}user}, address={Boston}, date={1998},
          volume={9}, 
          label={Sp 98}}

        \bib{corvallis}{article}{
          author={Tits, J.},
          title={Reductive groups over local fields},
          conference={
            title={Automorphic forms, representations and $L$-functions (Proc.
              Sympos. Pure Math., Oregon State Univ., Corvallis, Ore., 1977), Part
              1},
          },
          book={
            series={Proc. Sympos. Pure Math., XXXIII},
            publisher={Amer. Math. Soc.},
            place={Providence, R.I.},
          },
          date={1979},
          pages={29--69},
          label = {Ti 77}
        }
	
        \bib{weibel}{book}{
          author={Weibel, Charles A.},
          title={An introduction to homological algebra},
          series={Cambridge Studies in Advanced Mathematics},
          volume={38},
          publisher={Cambridge University Press},
          place={Cambridge},
          date={1994},
          pages={xiv+450},
          label = {We 94}}

  \end{biblist}
\end{bibsection}

\end{document}